\providecommand{\U}[1]{\protect\rule{.1in}{.1in}}
\providecommand{\U}[1]{\protect\rule{.1in}{.1in}}
\providecommand{\U}[1]{\protect\rule{.1in}{.1in}}
\providecommand{\U}[1]{\protect\rule{.1in}{.1in}}
\newtheorem{theorem}{Theorem}[section]
\theoremstyle{definition}
\theoremstyle{remark}
\newtheorem{remark}{Remark}[section]
\numberwithin{equation}{section}
\theoremstyle{example}
\numberwithin{equation}{section}
\begin{document}

\title{An inertial alternating direction method of multipliers for solving a two-block separable convex minimization problem}

\author{ Yang Yang$^{a}$, Yuchao Tang$^{a}$\footnote{Corresponding author: Yuchao Tang. Email address: hhaaoo1331@163.com} \\
\small ${^a}$Department of Mathematics, Nanchang University,\\
\small  Nanchang 330031, P.R. China
}

 \date{}

\maketitle

{}\textbf{Abstract.}
The alternating direction method of multipliers (ADMM) is a widely used method for solving many convex minimization models arising in signal and image processing. In this paper, we propose an inertial ADMM for solving a two-block separable convex minimization problem with linear equality constraints. This algorithm is obtained by making use of the inertial Douglas-Rachford splitting algorithm to the corresponding dual of the primal problem. We study the convergence analysis of the proposed algorithm in infinite-dimensional Hilbert spaces. Furthermore, we apply the proposed algorithm on the robust principal component pursuit problem and also compare it with other state-of-the-art algorithms. Numerical results demonstrate the advantage of the proposed algorithm.

\textbf{Keywords}: Alternating direction method of multipliers; Inertial method;  Douglas-Rachford splitting algorithm.

\textbf{AMS Subject Classification}: 65K05, 65K15, 90C25.

\section{Introduction}

In this paper, we consider the convex optimization problem of two-block separable objective functions with linear equality constraints:
\begin{equation}\label{problem1}
\begin{aligned}
 \min_{u\in H_1, v\in H_2}\, & F(u) + G(v) \\
\textrm{s.t.}\, & Mu+Nv=b,
\end{aligned}
\end{equation}
where $b\in H$, $M: H_{1}\rightarrow H$ and $N: H_{2}\rightarrow H$ are bounded linear operators, $F:H_{1}\rightarrow (-\infty, +\infty]$ and $G:H_{2}\rightarrow (-\infty, +\infty]$ are proper, convex and lower semi-continuous functions (not necessarily smooth), $H$, $H_{1}$ and $H_{2}$ are real Hilbert spaces. Specifically, $H=R^{p}$, $H_1 = R^{n}$, $H_2 = R^{m}$. Then $M\in R^{p\times n}$ and $N\in R^{p\times m}$. We use the notations of Hilbert spaces for generality. Many problems in signal and image processing, medical image reconstruction, machine learning, and many other areas are special case of (\ref{problem1}). When $N=-I$ and $b=0$, then (\ref{problem1}) degenerates into an important special case of it as follows,
\begin{equation}\label{problem2}
\begin{aligned}
 \min_{u\in H_1, v\in H_2}\, & F(u) + G(v) \\
\textrm{s.t.}\, & Mu=v,
\end{aligned}
\end{equation}
which is equivalent to
\begin{equation}\label{problem22}
\min_{u\in H_{1}} F(u)+G(Mu).
\end{equation}

The alternating direction method of multipliers (ADMM) is a popular way to solve (\ref{problem1}) and (\ref{problem2}). It has been attracted much attention because of its simplicity and efficiency. The ADMM can be dated back to the work of  Glowinski and Marroco \cite{Glowinski1975}, and Gabay and Mercier \cite{Gabay1976}. The classical formulation of the ADMM for solving (\ref{problem1}) can be presented below.
\begin{equation}\label{ADMM}
   \left\{
\begin{aligned}
u^{k+1} &  = \arg\min_{u} \{ F(u) + \langle y^k , Mu \rangle + \frac{\gamma}{2} \| Mu + Nv^k -b \|^2  \}, \\
v^{k+1} & = \arg\min_{v} \{ G(v) + \langle y^k , Nv \rangle + \frac{\gamma}{2} \| Mu^{k+1} + Nv -b \|^2  \}, \\
y^{k+1} & = y^k + \gamma (Mu^{k+1}+ N v^{k+1}-b),
\end{aligned}
\right.
\end{equation}
where $\gamma >0$. It is well-known that the ADMM can be interpreted as an application of the Douglas-Rachford splitting algorithm  to the dual of problem  (\ref{problem1}). See, for instance \cite{Eckstein1992}. It is worth mentioning that the famous split Bregman method \cite{goldstein2009} is also equivalent to the ADMM. See, e.g., \cite{esser2009}. A comprehensive review of the ADMM with applications in various convex optimization problems can be found in \cite{boyd1}. For the convergence and convergence rate analysis of ADMM for solving (\ref{problem1}) and (\ref{problem22}), we refer interested readers to \cite{He2012SIAMNA, Monteiro2013SIAMJO,Fang2015MPC,He2015NM} for more details. Also, many efforts have been tried to extend the ADMM for solving multi-block separable convex minimization problem, see for instance \cite{Hebs2015SIAMJO,Hebs2015IMAJNA,Chench2016MP}. In this paper, we focus on the general case of the two-block separable convex minimization problem (\ref{problem1}).

The generalized ADMM (GADMM) proposed by Eckstein and Bertsekas \cite{Eckstein1992} is an efficient and simple acceleration scheme of the classical ADMM (\ref{ADMM}) for solving (\ref{problem22}). It is easy to extend the GADMM to solve (\ref{problem1}), and the iterative scheme of the GADMM reads as
\begin{equation}\label{GADMM}
   \left\{
\begin{aligned}
u^{k+1} &  = \arg\min_{u} \{ F(u) + \langle y^k , Mu \rangle + \frac{\gamma}{2} \| Mu + Nv^k -b \|^2  \}, \\
v^{k+1} & = \arg\min_{v} \{ G(v) + \langle y^k , Nv \rangle + \frac{\gamma}{2} \| N(v-v^{k}) + \lambda_{k}(Mu^{k+1} + Nv^{k} -b) \|^2  \}, \\
y^{k+1} & = y^k + \gamma ( N(v^{k+1}-v^{k}) + \lambda_{k}(Mu^{k+1} + Nv^{k} -b)),
\end{aligned}
\right.
\end{equation}
where $\gamma>0$ and $\lambda_{k}\in(0,2)$. Let $\lambda_{k} = 1$, then the GADMM (\ref{GADMM}) reduces to the classical ADMM (\ref{ADMM}). There are many works demonstrated that the GADMM (\ref{GADMM}) can numerically accelerate the classical ADMM (\ref{ADMM}) with $\lambda_{k}$ belongs to $(1,2)$. See, for example \cite{Caixj2013,Fang2015MPC}.

In recent years, the inertial method becomes more and more popular. It can be used to accelerate the first-order method for solving nonsmooth convex optimization problems. It is closely related to the famous Nesterov's accelerate method, which utilizes the current iteration information and the previous iteration information to update the new iteration. Many inertial algorithms have been proposed and studied, such as inertial proximal point algorithm \cite{Alvarez2001,attouch:hal-01708905}, inertial forward-backward splitting algorithm \cite{Attouch2018SJO,Attouch2019AMO}, inertial forward-backward-forward splitting algorithm \cite{Bot2016NA}, inertial three-operator splitting algorithm \cite{Cui2019}, etc. There are also several attempts to introduce the inertial method to the ADMM. In particular, Chen et al. \cite{Chen2015} proposed an inertial proximal ADMM by combining the proximal ADMM \cite{Attouch2008PJO} and the inertial proximal point algorithm \cite{Alvarez2001}. The detail of the inertial proximal ADMM is presented below.
\begin{equation}\label{inertial-ADMM-chen}
   \left\{
\begin{aligned}
& (\bar{u}^{k},\bar{v}^{k},\bar{y}^{k})=(u^{k},v^{k},y^{k}) + \alpha_{k}(u^{k}-u^{k-1},v^{k}-v^{k-1},y^{k}-y^{k-1}), \\
& u^{k+1}=\arg\min_{u}\{F(u)+\langle \bar{y}^{k},Mu\rangle+\frac{\gamma}{2}\|Mu+N\bar{v}^{k}-b\|^{2}+\frac{1}{2}\|u-\bar{u}^{k}\|^{2}_{S}\}, \\
& y^{k+1}=\bar{y}^{k}+\gamma(Mu^{k+1}+N\bar{v}^{k}-b), \\
& v^{k+1}=\arg\min_{v}\{G(v)+\langle y^{k+1},Nv\rangle+\frac{\gamma}{2}\|Mu^{k+1}+Nv-b\|^{2}+\frac{1}{2}\|v-\bar{v}^{k}\|^{2}_{T}\},
\end{aligned}
\right.
\end{equation}
where $\alpha_{k}$ is a sequence of non-negative parameters, which usually called inertial parameters, $S$ and $T$ are symmetric and positive semidefinite matrices, and $\gamma>0$ is a penalty parameter. Let the matrices $S=T=0$ and $\alpha_k =0$, then the inertial proximal ADMM (\ref{inertial-ADMM-chen}) recovers the following ADMM type algorithm, which is studied in \cite{Caixj2013}.
\begin{equation}\label{ADMM-cai}
   \left\{
\begin{aligned}
& u^{k+1}=\arg\min_{u}\{F(u)+\langle y^{k},Mu\rangle+\frac{\gamma}{2}\|Mu+Nv^{k}-b\|^{2}\}, \\
& y^{k+1}=y^{k}+\gamma(Mu^{k+1}+Nv^{k}-b), \\
& v^{k+1}=\arg\min_{v}\{G(v)+\langle y^{k+1},Nv\rangle+\frac{\gamma}{2}\|Mu^{k+1}+Nv-b\|^{2}\}.
\end{aligned}
\right.
\end{equation}
The difference between (\ref{ADMM}) and (\ref{ADMM-cai}) is that the update order of the former is $u^{k+1} \rightarrow v^{k+1} \rightarrow y^{k+1}$, but the update order of the later is $u^{k+1} \rightarrow y^{k+1} \rightarrow v^{k+1}$. In contrast, Bo\c{t} and Csetnek \cite{Bot2016MTA} proposed an inertial ADMM for solving the convex optimization problem (\ref{problem22}), which was based on the inertial Douglas-Rachford splitting algorithm \cite{Bot2015AMC}. It takes the form of
\begin{equation}\label{inertial-ADMM-bot}
   \left\{
\begin{aligned}
& u^{k+1} = \arg\min_{u} \{ F(u) + \langle y^k - \alpha_k (y^k - y^{k-1}) - \gamma \alpha_k (v^k - v^{k-1}), M u \rangle \\
& \quad \quad \quad + \frac{\gamma}{2} \| Mu - v^k\|^2 \}, \\
& \overline{v}^{k+1} = \alpha_{k+1}\lambda_k (Mu^{k+1} - v^k ) + \frac{ (1-\lambda_k)\alpha_k \alpha_{k+1} }{\gamma} (y^k - y^{k-1} + \gamma (v^k - v^{k-1})), \\
& v^{k+1} = \arg\min_{v} \{  G(v + \overline{v}^{k+1}) + \langle  - y^k - (1-\lambda_k)\alpha_k ( y^k - y^{k-1} +\gamma (v^k - v^{k-1}) ), v \rangle \\
& \quad \quad \quad + \frac{\gamma}{2} \| v  - \lambda_k Mu^{k+1} - (1-\lambda_k)v^k \|^2 \}, \\
& y^{k+1} = y^k + \gamma ( \lambda_k Mu^{k+1} + (1-\lambda_k)v^k - v^{k+1}  ) \\
& \quad \quad \quad + (1-\lambda_k)\alpha_k ( y^k - y^{k-1} + \gamma (v^k - v^{k-1}) ).
\end{aligned}
\right.
\end{equation}
Let $\alpha_k =0$ and $\lambda_k =1$, then (\ref{inertial-ADMM-bot}) becomes the classical ADMM for solving the convex optimization problem (\ref{problem22}).  Bo\c{t} and Csetnek analyzed the convergence of the sequences generated by the inertial ADMM  (\ref{inertial-ADMM-bot}) under mild conditions on the parameters $\alpha_k$ and $\lambda_k$.

The purpose of this paper is to introduce an inertial ADMM for solving the general two-block separable convex optimization problem (\ref{problem1}). We prove the convergence of the sequences generated by the proposed inertial ADMM. Finally, we conduct numerical experiments on robust principal component pursuit (RPCP) problem and compare the proposed algorithm with the classical ADMM (\ref{ADMM}), the GADMM (\ref{GADMM}) and the inertial proximal ADMM (\ref{inertial-ADMM-chen}) to demonstrate the advantage of the proposed algorithm.

We would like to highlight the contributions of this paper. (i) An inertial ADMM is developed to solve the convex minimization problem (\ref{problem1}); (ii) The convergence of the proposed inertial ADMM is studied in infinite-dimensional Hilbert space; (iii) The effectiveness and efficiency of the proposed inertial ADMM is demonstrated by applying to the RPCP problem.

The structure of this paper is as follows. In the next section, we summarize some notations and definitions that will be used in the following sections. We also recall the inertial Douglas-Rachford splitting algorithm. In Section 3, we introduce an inertial ADMM and study its convergence results in detail. In Section 4, some numerical experiments for solving the RPCP problem (\ref{e4.2}) are conducted to demonstrate the efficiency of the proposed algorithm. Finally, we give some conclusions.

\section{Preliminaries\label{sec:pre}}

In this section, we recall some concepts of monotone operator theory and convex analysis in Hilbert spaces. Most of them can be found in \cite{bauschkebook2017}.
Let $H_{1}$ and $H_{2}$ be real Hilbert spaces with $inner\ product \langle\cdot,\cdot\rangle$ and associated $norm \|\cdot\|=\sqrt{\langle\cdot,\cdot\rangle}$. $x_{k}\rightharpoonup x$ stands for $\{x_{k}\}$ converging weakly to $x$, and $x_{k}\rightarrow x$ stands for $\{x_{k}\}$ converging strongly to $x$. Let $A:H_{1}\rightarrow H_{2}$ be a linear continuous operator and its  adjoint operator be $A^{\ast}:H_{2}\rightarrow H_{1}$ is the unique operator that satisfies $\langle A^{\ast}y,x\rangle=\langle y,Ax\rangle$ for all $x\in H_{1}$ and $y\in H_{2}$.

Let $A:H\rightarrow2^{H}$ be a set-valued operator. We denote its set of zeros by $zer A=\{x\in H:0\in Ax\}$, by $Gra A=\{(x,u)\in H\times H:u\in Ax\}$ denote its graph and its inverse operator denote by $A^{-1}:H\rightarrow2^{H}$, where $(u,x)\in Gra A^{-1}$ if and only if $(x,u)\in Gra A$. We say that $A$ is monotone if $\langle x-y,u-v\rangle\geq 0$, for all $(x,u),(y,v)\in Gra A$. $A$ is said to be maximally monotone if its graph is not contained in the graph of any other monotone operator. Letting $\gamma>0$, the resolvent of $\gamma A$ is defined by
\begin{equation}
J_{\gamma A}=(I+\gamma A)^{-1}.
\end{equation}
Moreover, if $A$ is maximally monotone, then $J_{\gamma A}:H\rightarrow H$ is single-valued and maximally monotone.

The operator $A$ is uniformly monotone with modulus $\phi_{A}:R_{+}\rightarrow[0,+\infty)$, if $\phi_{A}$ is increasing, $\phi_{A}$ vanishes only at $0$, and
\begin{equation}
\langle x-y,u-v\rangle\geq \phi_{A}(\|x-y\|).
\end{equation}
for all $(x,u),(y,v)\in Gra A$. Moreover, If $\phi_{A}=\gamma\|x-y\|^{2}(\gamma>0)$, then $A$ is $\gamma$-strongly monotone.

For a function $f:H\rightarrow \bar{R}$, where $\bar{R}:=R\bigcup \{+\infty\}$, We define $dom f=\{x\in H:f(x)<+\infty\}$ as its effective domain and say that $f$ is proper if $dom f\neq \emptyset$ and $f(x)\neq -\infty$ for all $x\in H$. Let $f:H\rightarrow\bar{R}$. The conjugate of $f$ is $f^{\ast}:H\rightarrow\bar{R}:u\mapsto \sup_{x\in H} \{\langle x,u\rangle -f(x)\}$. We denote by $\Gamma_{0}(H)$ the family of proper, convex and lower semi-continuous extended real-valued functions defined on $H$. Let $f\in \Gamma_{0}(H)$. The subdifferential of $f$ is the set-valued operator $\partial f(x):=\{v\in H:f(y)\geq f(x)+\langle v,y-x\rangle \, \forall y\in H \}$. Moreover, $\partial f$ is a maximally monotone operator and it holds $(\partial f)^{-1}=\partial f^{\ast}$. For every $x\in H$ and arbitrary $\gamma>0$, we have
\begin{equation}
Prox_{\gamma f}(x)=\arg\min_{y}\{\gamma f(y)+\frac{1}{2}\|y-x\|^{2} \}.
\end{equation}
Here $prox_{\gamma f}(x)$ is the proximal point of parameter $\gamma$ of $f$ at $x$. And the proximal point operator of $f$ satisfies $Prox_{\gamma f}(x)=(I+\gamma\partial f)^{-1}=J_{\gamma\partial f}$.

Let $f\in \Gamma_{0}(H)$. Then $f$ is uniformly convex with modulus $\phi:R_{+}\rightarrow[0,+\infty)$ if $\phi$ is increasing, $\phi$ vanishes only at $0$, and
\begin{equation}\label{def7}
f(\alpha x+(1-\alpha)y)+\alpha(1-\alpha)\phi(\|x-y\|)\leq\alpha f(x)+(1-\alpha)f(y).
\end{equation}
for all $x,y\in domf,\forall \alpha\in(0,1)$. If (\ref{def7}) holds with $\phi=\frac{\beta}{2}(\cdot)^{2}$ for some $\beta\in(0,+\infty)$, then $f$ is $\beta$-strongly convex. This also means that $\partial f$ is $\beta$-strongly monotone (if $f$ is uniformly convex, then $\partial f$ is uniformly monotone).

At the end of this section, we recall the main results of the inertial Douglas-Rachford splitting algorithm in \cite{Bot2015AMC}.

\begin{theorem}(\cite{Bot2015AMC})\label{theo1}
Let $A,B:H\rightarrow 2^{H}$ be maximally monotone operators. Assume $zer(A+B)\neq\emptyset$. For any given $w^{0},w^{1}\in H$, define the iterative sequences as follows:
\begin{equation}\label{algo1}
   \left\{
\begin{aligned}
& y^{k}=J_{\gamma B}(w^{k}+\alpha_{k}(w^{k}-w^{k-1})), \\
& x^{k}=J_{\gamma A}(2y^{k}-w^{k}-\alpha_{k}(w^{k}-w^{k-1}), \\
& w^{k+1}=w^{k}+\alpha_{k}(w^{k}-w^{k-1})+\lambda_{k}(x^{k}-y^{k}),
\end{aligned}
\right.
\end{equation}
where $\gamma > 0$, $\{\alpha_{k}\}_{k\geq1}$ is nondecreasing with $\alpha_{1}=0$ and $0\leq\alpha_{k}\leq\alpha<1$ for every $k\geq 1$ and $\lambda,\delta,\sigma>0$ are such that
\begin{equation}\label{eq2.5}
\delta>\frac{\alpha^{2}(1+\alpha)+\alpha\sigma}{1-\alpha^{2}}, \quad \textrm{ and } \quad 0<\lambda\leq\lambda_{k}\leq2\frac{\delta-\alpha[\alpha(1+\alpha)+\alpha\delta+\sigma]}{\delta[1+\alpha(1+\alpha)+\alpha\delta+\sigma]},\quad \forall k\geq 1.
\end{equation}
Then there exists $x\in H$ such that the following statements are true:
\vskip 1mm
\rm (1) $J_{\gamma B}x\in zer(A+B)$;

\rm(2) $\Sigma_{k\in N}\|w^{k+1}-w^{k}\|^{2}<+\infty$;

\rm(3) $\{w^{k}\}_{k\geq0}$ converges weakly to $x$;

\rm(4) $y^{k}-x^{k}\rightarrow0$ as $k\rightarrow+\infty$;

\rm(5) $\{y^{k}\}_{k\geq1}$ converges weakly to $J_{\gamma B}x$;

\rm(6) $\{x^{k}\}_{k\geq1}$ converges weakly to $J_{\gamma B}x=J_{\gamma A}(2J_{\gamma B}x-x)$;

\rm(7) if $A$ or $B$ is uniformly monotone, then $\{y^{k}\}_{k\geq1}$ and $\{x^{k}\}_{k\geq1}$ converges strongly to unique point in $zer(A+B)$.
\end{theorem}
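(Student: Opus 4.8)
The plan is to recognise the iteration (\ref{algo1}) as an inertial, relaxed Krasnoselskii--Mann scheme for the Douglas--Rachford operator and then run a Lyapunov / quasi-Fej\'er argument. Put $T:=J_{\gamma A}\circ(2J_{\gamma B}-I)+(I-J_{\gamma B})$; this operator is firmly nonexpansive (equivalently $\tfrac12$-averaged), and the resolvent calculus gives $z\in\mathrm{Fix}(T)\Longleftrightarrow J_{\gamma B}z\in zer(A+B)$, so $zer(A+B)\neq\emptyset$ forces $\mathrm{Fix}(T)\neq\emptyset$. Writing $z^{k}:=w^{k}+\alpha_{k}(w^{k}-w^{k-1})$, one reads off from (\ref{algo1}) that $y^{k}=J_{\gamma B}z^{k}$, $x^{k}=J_{\gamma A}(2y^{k}-z^{k})$, $(I-T)z^{k}=y^{k}-x^{k}$ and $w^{k+1}=(1-\lambda_{k})z^{k}+\lambda_{k}Tz^{k}$. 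Hence the whole theorem reduces to a convergence statement for inertial relaxed iterations of a firmly nonexpansive map, and the roles of the conclusions become clear: (4) is $(I-T)z^{k}\to0$, (2) is summability of the increments, (3), (5), (6) are weak-limit statements for $w^{k}$, $y^{k}$, $x^{k}$, and (1), (7) are the identification of the limits.

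The heart of the proof is a single energy inequality. Fix $z\in\mathrm{Fix}(T)$. A direct expansion gives $\|w^{k+1}-z\|^{2}=(1-\lambda_{k})\|z^{k}-z\|^{2}+\lambda_{k}\|Tz^{k}-z\|^{2}-\lambda_{k}(1-\lambda_{k})\|(I-T)z^{k}\|^{2}$, and firm nonexpansiveness of $T$ gives $\|Tz^{k}-z\|^{2}\le\|z^{k}-z\|^{2}-\|(I-T)z^{k}\|^{2}$, so $\|w^{k+1}-z\|^{2}\le\|z^{k}-z\|^{2}-\lambda_{k}(2-\lambda_{k})\|(I-T)z^{k}\|^{2}$. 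I would then substitute the exact expansion $\|z^{k}-z\|^{2}=(1+\alpha_{k})\|w^{k}-z\|^{2}-\alpha_{k}\|w^{k-1}-z\|^{2}+\alpha_{k}(1+\alpha_{k})\|w^{k}-w^{k-1}\|^{2}$, bound $\|w^{k+1}-w^{k}\|^{2}$ from $w^{k+1}-w^{k}=\alpha_{k}(w^{k}-w^{k-1})+\lambda_{k}(Tz^{k}-z^{k})$ via Young's inequality with a free parameter, and assemble the pieces into a Lyapunov functional $\mu_{k}:=\|w^{k}-z\|^{2}-\alpha_{k}\|w^{k-1}-z\|^{2}+c\,\|w^{k}-w^{k-1}\|^{2}$, with a constant $c>0$ tuned so that $\mu_{k+1}\le\mu_{k}-c_{1}\|w^{k+1}-w^{k}\|^{2}-c_{2}\|(I-T)z^{k}\|^{2}$ for some $c_{1},c_{2}>0$ and so that $\mu_{k}$ is bounded below. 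Demanding all these positivities at once, together with the monotonicity of $\{\alpha_{k}\}$ and $\alpha_{1}=0$, is exactly what produces the coupled parameter window (\ref{eq2.5}); the auxiliary constants $\delta$ and $\sigma$ there are the slack parameters entering this balancing.

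From $\mu_{k+1}\le\mu_{k}-c_{1}\|w^{k+1}-w^{k}\|^{2}-c_{2}\|(I-T)z^{k}\|^{2}$ the remaining conclusions follow by now-standard arguments. First, $\{w^{k}\}$ is bounded and $\mu_{k}$ converges; telescoping yields $\sum_{k}\|w^{k+1}-w^{k}\|^{2}<+\infty$ (conclusion (2)) and $\sum_{k}\|(I-T)z^{k}\|^{2}<+\infty$, so $x^{k}-y^{k}=(I-T)z^{k}\to0$ (conclusion (4)). Using $\|w^{k}-w^{k-1}\|\to0$, $\alpha_{k}\nearrow\alpha_{\infty}<1$ and a discrete inequality of Alvarez--Attouch type (a sequence with $a_{k+1}\le a_{k}+\alpha_{k}(a_{k}-a_{k-1})+\delta_{k}$, $\sum\delta_{k}<\infty$, $0\le\alpha_{k}\le\alpha<1$, is convergent), one checks that $\|w^{k}-z\|$ converges for every $z\in\mathrm{Fix}(T)$. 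Since $z^{k}-w^{k}\to0$ strongly, any weak cluster point of $\{w^{k}\}$ is also one of $\{z^{k}\}$, and combined with $(I-T)z^{k}\to0$ and demiclosedness of $I-T$ at $0$ it lies in $\mathrm{Fix}(T)$; Opial's lemma then gives $w^{k}\rightharpoonup x\in\mathrm{Fix}(T)$ (conclusion (3)), and $J_{\gamma B}x\in zer(A+B)$ is conclusion (1). For (5)/(6): $\{y^{k}\}$ is bounded, $z^{k}\rightharpoonup x$, $\tfrac1\gamma(z^{k}-y^{k})\in By^{k}$ and $\tfrac1\gamma(2y^{k}-z^{k}-x^{k})\in Ax^{k}$; passing to a weak cluster point $\bar{y}$ of $\{y^{k}\}$ (so $x^{k}\rightharpoonup\bar{y}$ too) and invoking the closedness of the graphs of the maximally monotone operators under the asymptotic tightness present here ($x^{k}-y^{k}\to0$ and the sum of the second components tending to $0$), one obtains $x-\bar{y}\in\gamma B\bar{y}$, i.e. $\bar{y}=J_{\gamma B}x$, so $y^{k}\rightharpoonup J_{\gamma B}x$ and, by $x^{k}-y^{k}\to0$, $x^{k}\rightharpoonup J_{\gamma B}x=J_{\gamma A}(2J_{\gamma B}x-x)$. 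Finally, for (7), if (say) $B$ is uniformly monotone with modulus $\phi_{B}$, then comparing the inclusion at $y^{k}$ with the one at $J_{\gamma B}x$ and letting $k\to\infty$ forces $\phi_{B}(\|y^{k}-J_{\gamma B}x\|)\to0$, hence $y^{k}\to J_{\gamma B}x$ strongly and then $x^{k}\to J_{\gamma B}x$; uniform monotonicity also makes the zero of $A+B$ unique, and the case of $A$ uniformly monotone is symmetric.

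I expect the main obstacle to be the construction in the second paragraph. The inertial term $\alpha_{k}(w^{k}-w^{k-1})$ destroys the plain Fej\'er monotonicity of $\{\|w^{k}-z\|^{2}\}$, so one cannot argue with that quantity directly and must instead find a shifted energy $\mu_{k}$ whose strict decrease survives the extrapolation; getting $c$, $c_{1}$ and $c_{2}$ simultaneously positive is precisely the algebra that dictates the (somewhat opaque) coupled constraints (\ref{eq2.5}) tying together $\alpha$, $\delta$, $\sigma$ and the admissible relaxation range for $\lambda_{k}$. Once that inequality is in hand, everything else is a routine adaptation of Douglas--Rachford convergence theory together with the inertial discrete lemma.
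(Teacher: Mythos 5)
The paper does not prove this theorem at all---it is imported verbatim from Bo\c{t}--Csetnek--Hendrich \cite{Bot2015AMC}---and your proposal reconstructs essentially the argument of that cited source: rewrite (\ref{algo1}) as the inertial relaxed Krasnosel'skii--Mann iteration $w^{k+1}=(1-\lambda_{k})z^{k}+\lambda_{k}Tz^{k}$ for the firmly nonexpansive Douglas--Rachford operator $T=J_{\gamma A}(2J_{\gamma B}-I)+I-J_{\gamma B}$, establish decrease of the shifted energy $\mu_{k}=\|w^{k}-z\|^{2}-\alpha_{k}\|w^{k-1}-z\|^{2}+c\|w^{k}-w^{k-1}\|^{2}$ (which is exactly where the window (\ref{eq2.5}) comes from), and then conclude via the Alvarez--Attouch discrete lemma, Opial's argument with demiclosedness of $I-T$, and graph-closedness of $A$ and $B$ for the limit identification. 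The one step to tighten is item (7): comparing only the $B$-inclusions at $y^{k}$ and $J_{\gamma B}x$ leaves an inner product of two merely weakly null sequences, which need not vanish; you must add the monotonicity inequality for $A$ at the pair $(x^{k},J_{\gamma A}(2J_{\gamma B}x-x))$ so that the weak--weak cross terms cancel and only terms multiplied by the strongly null $x^{k}-y^{k}$ remain, after which $\phi_{B}(\|y^{k}-J_{\gamma B}x\|)\rightarrow 0$ and the claimed strong convergence follow.
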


By making full use of the inertial Krasnoselskii-Mann iteration scheme of Maing\'{e} \cite{Mainge2008JCAM}, we present another convergence theorem of the inertial Douglas-Rachford splitting algorithm (\ref{algo1}). Since the proof is similar to Theorem \ref{theo1}, so we omit it here.

\begin{theorem}\label{theo2.2}
Let $A,B:H\rightarrow 2^{H}$ be maximally monotone operators. Assume $zer(A+B)\neq\emptyset$. Let the sequences $\{y^{k}\}$, $\{x^{k}\}$ and $\{w^{k}\}$ be defined by (\ref{algo1}). Let the sequences $\{\lambda_{k}\}$ and $\{\alpha_{k}\}$ satisfy the following conditions:

\rm(i) $0\leq\alpha_{k}\leq\alpha<1$, $0<\underline{\lambda}\leq\lambda_{k}\leq\overline{\lambda}<2$;

\rm(ii) $\sum_{k=0}^{+\infty}\alpha_{k}\|w^{k}-w^{k-1}\|^{2}<+\infty$.

Then there exists $x\in H$ such that the following statements are true:
\vskip 1mm
\rm (1) $J_{\gamma B}x\in zer(A+B)$;

\rm(2) $\{w^{k}\}_{k\geq0}$ converges weakly to $x$;

\rm(3) $y^{k}-x^{k}\rightarrow0$ as $k\rightarrow+\infty$;

\rm(4) $\{y^{k}\}_{k\geq1}$ converges weakly to $J_{\gamma B}x$;

\rm(5) $\{x^{k}\}_{k\geq1}$ converges weakly to $J_{\gamma B}x=J_{\gamma A}(2J_{\gamma B}x-x)$;

\rm(6) if $A$ or $B$ is uniformly monotone, then $\{y^{k}\}_{k\geq1}$ and $\{x^{k}\}_{k\geq1}$ converges strongly to unique point in $zer(A+B)$.
\end{theorem}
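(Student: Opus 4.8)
The plan is to recognize that the iteration (\ref{algo1}) is precisely Maing\'{e}'s inertial Krasnoselskii--Mann iteration applied to the Douglas-Rachford operator, and then to invoke Maing\'{e}'s theorem \cite{Mainge2008JCAM} together with the standard analysis of the Douglas-Rachford ``shadow'' sequence, along the same lines as the proof of Theorem \ref{theo1} in \cite{Bot2015AMC}. First I would write ${w'}^{k}:=w^{k}+\alpha_{k}(w^{k}-w^{k-1})$ and introduce the Douglas-Rachford operator
\begin{equation*}
T:=I-J_{\gamma B}+J_{\gamma A}(2J_{\gamma B}-I)=\tfrac{1}{2}\bigl(I+R_{\gamma A}R_{\gamma B}\bigr),\qquad R_{\gamma A}:=2J_{\gamma A}-I,\quad R_{\gamma B}:=2J_{\gamma B}-I.
\end{equation*}
A direct computation from (\ref{algo1}) gives $x^{k}-y^{k}=T{w'}^{k}-{w'}^{k}$, so that, with $R:=2T-I=R_{\gamma A}R_{\gamma B}$,
\begin{equation*}
w^{k+1}=(1-\lambda_{k}){w'}^{k}+\lambda_{k}T{w'}^{k}=\Bigl(1-\tfrac{\lambda_{k}}{2}\Bigr){w'}^{k}+\tfrac{\lambda_{k}}{2}R{w'}^{k}.
\end{equation*}
Since $R$ is nonexpansive with $\mathrm{Fix}\,R=\mathrm{Fix}\,T$, and since (by the classical Douglas-Rachford theory, see \cite{bauschkebook2017}) $\mathrm{Fix}\,T\neq\emptyset\Leftrightarrow zer(A+B)\neq\emptyset$, $J_{\gamma B}(\mathrm{Fix}\,T)=zer(A+B)$, and $J_{\gamma B}x=J_{\gamma A}(2J_{\gamma B}x-x)$ for $x\in\mathrm{Fix}\,T$, the last display exhibits (\ref{algo1}) as the inertial Krasnoselskii--Mann scheme for the nonexpansive map $R$ with relaxation parameters $\mu_{k}:=\lambda_{k}/2$ and inertial parameters $\alpha_{k}$. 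The passage to $R=2T-I$ is what accommodates the relaxation range $\lambda_{k}\in(0,2)$ in hypothesis (i), since Maing\'{e}'s theorem is stated for relaxation parameters in $(0,1)$.

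Next I would verify the hypotheses of Maing\'{e}'s theorem: by (i) we have $0\leq\alpha_{k}\leq\alpha<1$ and $\mu_{k}\in[\underline{\lambda}/2,\overline{\lambda}/2]\subset(0,1)$, and (ii) is exactly the summability requirement $\sum_{k}\alpha_{k}\|w^{k}-w^{k-1}\|^{2}<+\infty$. Maing\'{e}'s theorem then yields that $\{w^{k}\}$ converges weakly to some $x\in\mathrm{Fix}\,R=\mathrm{Fix}\,T$ (this is (2), and (1) and the fixed-point identity in (5) follow immediately from $x\in\mathrm{Fix}\,T$) and that $\|R{w'}^{k}-{w'}^{k}\|\to0$, i.e., $\|x^{k}-y^{k}\|\to0$ (this is (3)). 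Moreover, $w^{k+1}-{w'}^{k}=\tfrac{\lambda_{k}}{2}(R{w'}^{k}-{w'}^{k})\to0$, and combining this with $\|w^{k+1}-w^{k}\|\leq\alpha\|w^{k}-w^{k-1}\|+\tfrac{\overline{\lambda}}{2}\|R{w'}^{k}-{w'}^{k}\|$ and $\alpha<1$ gives $\|w^{k}-w^{k-1}\|\to0$; hence ${w'}^{k}-w^{k}=\alpha_{k}(w^{k}-w^{k-1})\to0$ and ${w'}^{k}\rightharpoonup x$.

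It remains to prove (4), (5) and (6), and here I would argue exactly as in the proof of Theorem \ref{theo1} in \cite{Bot2015AMC}. From $\|y^{k}-J_{\gamma B}x\|=\|J_{\gamma B}{w'}^{k}-J_{\gamma B}x\|\leq\|{w'}^{k}-x\|$ the sequence $\{y^{k}\}$ is bounded, and so is $\{x^{k}\}$ since $x^{k}-y^{k}\to0$. Using the inclusions $\tfrac{1}{\gamma}({w'}^{k}-y^{k})\in By^{k}$ and $\tfrac{1}{\gamma}(2y^{k}-{w'}^{k}-x^{k})\in Ax^{k}$ (coming from the definitions of $y^{k}$ and $x^{k}$), together with $x^{k}-y^{k}\to0$, ${w'}^{k}\rightharpoonup x$, and the maximal monotonicity of $A$ and $B$, one shows that every weak sequential cluster point of $\{y^{k}\}$ (equivalently of $\{x^{k}\}$) lies in $zer(A+B)$ and equals $J_{\gamma B}x$; since $\{y^{k}\}$ and $\{x^{k}\}$ are bounded this gives $y^{k}\rightharpoonup J_{\gamma B}x$ and $x^{k}\rightharpoonup J_{\gamma B}x$, which are (4) and (5). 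For (6), when $B$ (say) is uniformly monotone with modulus $\phi_{B}$, comparing $\tfrac{1}{\gamma}({w'}^{k}-y^{k})\in By^{k}$ with $\tfrac{1}{\gamma}(x-J_{\gamma B}x)\in B(J_{\gamma B}x)$ produces, within the above estimates, a nonnegative term bounded above by a null sequence and below by $\phi_{B}(\|y^{k}-J_{\gamma B}x\|)$, which forces $y^{k}\to J_{\gamma B}x$ and then $x^{k}\to J_{\gamma B}x$ strongly; the case where $A$ is uniformly monotone is symmetric, and strict monotonicity of $A+B$ yields uniqueness of the zero. The main obstacle is precisely this last passage to the limit in the monotone inclusions with only weak convergence of the iterates at hand; this is the delicate point of the Douglas-Rachford convergence analysis, already settled in \cite{Bot2015AMC}, and it relies essentially on the maximal monotonicity of $A$ and $B$ and the firm nonexpansiveness of $T$.
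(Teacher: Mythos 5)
Your proposal is correct and follows exactly the route the paper intends: the paper omits the proof but states that Theorem \ref{theo2.2} is obtained ``by making full use of the inertial Krasnoselskii--Mann iteration scheme of Maing\'{e}'' with the remaining arguments as in Theorem \ref{theo1} of \cite{Bot2015AMC}, which is precisely your reduction of (\ref{algo1}) to an inertial KM iteration for $R=R_{\gamma A}R_{\gamma B}$ with relaxation $\lambda_k/2\in(0,1)$, followed by the standard Douglas--Rachford shadow-sequence analysis. The identification $x^k-y^k=Tw'^k-w'^k$, the verification of Maing\'{e}'s hypotheses from (i)--(ii), and the monotone-inclusion limit arguments for (4)--(6) are all as in the cited sources, so the proof is sound.
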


\begin{remark}\label{rema1}
According to \cite{Bot2015AMC}, the condition $\alpha_{1}=0$ in the above Theorem \ref{theo1} can be replaced by the assumption $w^{0}=w^{1}$.
\end{remark}
\vskip 2mm

\section{Inertial ADMM for solving the two-block separable convex minimization problem }

In this section, we present the main results of this paper.
Let $H$, $H_{1}$ and $H_{2}$ be real Hilbert spaces, $F\in \Gamma_{0}(H_{1})$, $G\in \Gamma_{0}(H_{2})$, $b\in H$, $M:H_{1}\rightarrow H$ and $N:H_{2}\rightarrow H$ are linear continuous operators.
The dual problem of (\ref{problem1}) is
\begin{equation}\label{1.8}
\max_{y\in H}-F^{\ast}(-M^{\ast}y)-G^{\ast}(-N^{\ast}y)-\langle y,b\rangle,
\end{equation}
where $F^{\ast}$ and $G^{\ast}$ are the Fenchel-conjugate functions of $F$ and $G$, respectively. We consider solving the convex optimization problems (\ref{problem1}) and its  dual problem (\ref{1.8}). Let $v(P)$ and $v(D)$ be the optimal objective values of the above two problems respectively, the situation $v(P)\geq v(D)$, called in the literature weak duality, always holds. We introduce the $Attouch-Br\acute{e}zies$ condition, that is
\begin{equation}\label{3.1}
0 \in sqri(M(dom\, F)-N(dom\, G)).
\end{equation}
For arbitrary convex set $C\subseteq H$, we define its $strong\ quasi-relative\ interior$ as
\begin{equation}\label{3.2}
sqri\ C:=\{x\in C:\cup_{\lambda >0}\lambda(C-x)\ is\ a\ closed\ linear\ subspace\ of\ H\}.
\end{equation}
If (\ref{3.1}) holds, then we have strong duality, which means that $v(P) = v(D)$ and (\ref{1.8}) has an optimal solution.

Next, we introduce the main algorithm in this paper.
\vskip 2mm
\renewcommand{\algorithmicrequire}{\textbf{Input:}}
\renewcommand{\algorithmicensure}{\textbf{Output:}}
\begin{algorithm}[htb]
\caption{An inertial alternating direction method of multipliers (iADMM)}
\label{algo2}
\begin{algorithmic}[1]
\REQUIRE
\vskip 2mm
For arbitrary $y^{1}\in H$, $v^{1}\in H_{2}$, $p^{1}=0$, choose $\gamma$, $\alpha_{k}$ and $\lambda_{k}$.\\
For $k= 1,2,3, \cdots$, compute
\quad  \STATE $u^{k+1}=\arg\min_{u}\{ F(u)+\langle y^{k},Mu\rangle+\frac{\gamma}{2}\|Mu+Nv^{k}-b\|^{2}\}$;

\STATE $v^{k+1} =\arg\min_{v}\{G(v)+\langle y^{k}+\alpha_{k+1}p^{k},Nv\rangle +\frac{\gamma}{2}\|N(v-v^{k})+(1+\alpha_{k+1})\lambda_{k}(Mu^{k+1}+Nv^{k}-b)\|^{2}\}$;

\STATE $y^{k+1}=y^{k}+\alpha_{k+1}p^{k}+\gamma[N(v^{k+1}-v^{k})+(1+\alpha_{k+1})\lambda_{k}(Mu^{k+1}+Nv^{k}-b)]$;\\

\STATE $p^{k+1}=\alpha_{k+1}[p^{k}+\gamma\lambda_{k}(Mu^{k+1}+Nv^{k}-b)]$;

Stop when a given stopping criterion is met.
\ENSURE $u^{k+1},v^{k+1}$ and $y^{k+1}$.
\end{algorithmic}
\end{algorithm}
\vskip 2mm

In order to analyze the convergence of Algorithm \ref{algo2}, we define the Lagrangian function of problem (\ref{problem1}) as follows:
\begin{equation}\label{1.3}
L(u,v,y)=F(u)+G(v)+\langle y,Mu+Nv-b\rangle,
\end{equation}
where $y$ is a Lagrange multiplier.  Assuming $(u^{\ast},v^{\ast})$ is a optimal solution of the optimization problem (\ref{problem1}), there exits a vector $y^{*}$, according to KKT condition, we have
\begin{equation}\label{1.4}
\begin{aligned}
& 0\in \partial F(u^{\ast})+M^{\ast}y^{\ast}, \\
& 0\in \partial G(v^{\ast})+N^{\ast}y^{\ast}, \\
& Mu^{\ast}+Nv^{\ast}-b=0.
\end{aligned}
\end{equation}
Moreover, point pairs $(u^{\ast},v^{\ast},y^{\ast})$ are saddle points of Lagrange function, that is
\begin{equation}\label{1.7}
L(u^{\ast},v^{\ast},y)\leq L(u^{\ast},v^{\ast},y^{\ast})\leq L(u,v,y^{\ast})\quad \forall(u,v,y)\in H_{1}\times H_{2}\times H.
\end{equation}

In order to analyze the convergence of the proposed Algorithm \ref{algo2} in Hilbert spaces, we show that the iterative sequences generated by Algorithm \ref{algo2} are instances of the inertial Douglas-Rachford splitting algorithm (\ref{algo1}) applied to the dual problem (\ref{1.8}). In detail, we show that Algorithm \ref{algo2} could be derived from the inertial Douglas-Rachford splitting  algorithm (\ref{algo1}). Then we use Theorem \ref{theo1} to obtain the convergence of  the proposed Algorithm \ref{algo2}. Now, we are ready to present the main convergence theorem of Algorithm \ref{algo2}.

\begin{theorem}\label{theo2}
Assuming (\ref{problem1}) has an optimal solution and the condition (\ref{3.1}) is satisfied. Let the bounded linear operators $M$ and $N$ satisfy the condition that $\exists \theta_1 >0, \theta_2 >0$ such that $\|Mx\| \geq \theta_1 \|x\|$ and $\|Nx\| \geq \theta_2 \|x\|$, for all $x\in H$. Consider the sequence generated by Algorithm \ref{algo2}. Let $\gamma>0$, $\{\alpha_{k}\}_{k\geq1}$ nondecreasing with $0\leq\alpha_{k}\leq\alpha<1$, $\{\lambda_{k}\}_{k\geq 1}$ and $\lambda,\sigma,\delta>0$ such that
$$
\delta>\frac{\alpha^{2}(1+\alpha)+\alpha\sigma}{1-\alpha^{2}}\quad and \quad 0<\lambda\leq\lambda_{k}\leq2\frac{\delta-\alpha[\alpha(1+\alpha)+\alpha\delta+\sigma]}{\delta[1+\alpha(1+\alpha)+\alpha\delta+\sigma]}\quad \forall k\geq1,
$$
Then there exists a point pair $(u^{\ast},v^{\ast},y^{\ast})$, which is the saddle point of Lagrange function, where $(u^{\ast},v^{\ast})$ is the optimal solution of (\ref{problem1}), $y^{\ast}$ is the optimal solution of (\ref{1.8}), and $v(P)=v(D)$. The following statements are true:
\vskip 1mm
\rm (i)\, $(u^{k},v^{k})_{k\geq1}$ converges weakly to $(u^{\ast},v^{\ast})$;

%

\rm (ii)\, $(Mu^{k+1}+Nv^{k})_{k\geq1}$ converges strongly to $b$;

\rm (iii)\, $(y^{k})_{k\geq1}$ converges weakly to $y^{\ast}$;

\rm (iv)\, if $F^{\ast}$ or $G^{\ast}$ is uniformly convex, the $(y^{k})_{k\geq1}$ converges strongly to unique optimal solution of (D);

\rm (v)\, $\lim_{k\rightarrow+\infty}F(u^{k+1})+G(v^{k})=v(P)=v(D)= \lim_{k\rightarrow+\infty}(-F^{\ast}(-M^{\ast}x^{k})-G^{\ast}(-N^{\ast}y^{k})-\langle y^{k},b\rangle)$,where the sequence $(x^{k})_{k\geq1}$ is defined by
$$
x^{k}=y^{k}+\gamma Mu^{k+1}+\gamma Nv^{k}-\gamma b,
$$
and $(x^{k})_{k\geq1}$ converges weakly to $y^{\ast}$.
\end{theorem}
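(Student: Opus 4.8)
The plan is to realize Algorithm \ref{algo2} as a concrete instance of the inertial Douglas--Rachford iteration (\ref{algo1}) applied to the two maximally monotone operators arising from the dual problem (\ref{1.8}), and then simply translate each conclusion of Theorem \ref{theo1} into the primal language of (\ref{problem1}). Concretely, I would set $A = \partial\big(G^{\ast}(-N^{\ast}\cdot)\big)$ and $B = \partial\big(F^{\ast}(-M^{\ast}\cdot) + \langle\cdot,b\rangle\big)$, so that, by Fenchel--Rockafellar duality together with (\ref{3.1}), $zer(A+B)$ is precisely the set of optimal solutions of the dual (\ref{1.8}) and is nonempty. Since (\ref{3.1}) holds, the subdifferential sum rule applies and both $A$ and $B$ are maximally monotone; the chain rule for subdifferentials identifies $M^{\ast}\partial F^{\ast}(-M^{\ast}y)$ and $N^{\ast}\partial G^{\ast}(-N^{\ast}y)$ as the relevant set-valued maps. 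The parameter hypotheses on $\gamma$, $\{\alpha_k\}$, $\{\lambda_k\}$, $\lambda$, $\sigma$, $\delta$ are exactly those of Theorem \ref{theo1}, so once the identification is made, conclusions (2)--(7) of that theorem are available verbatim.

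The main work is the algebraic derivation showing that the $w$-, $y$-, $x$-updates of (\ref{algo1}) with these $A,B$ reduce, after the substitutions $w^k \leftrightarrow y^k$ (the dual variable), and with the auxiliary quantity $p^k$ absorbing the inertial combination $\gamma\lambda_{k-1}(Mu^{k}+Nv^{k-1}-b)$ history, to the four lines of Algorithm \ref{algo2}. The key computational facts I would use are: evaluating $J_{\gamma B}$ amounts to solving $\min_u\{F(u)+\langle y,Mu\rangle+\tfrac{\gamma}{2}\|Mu+\cdot-b\|^2\}$ for the $u$-block and extracting a resolvent point via Moreau-type identities, and likewise $J_{\gamma A}$ corresponds to the $v$-block minimization; the relation between the primal minimizers $u^{k+1},v^{k+1}$ and the DR iterates $x^k,y^k$ is the standard ADMM-from-DR dictionary (as in \cite{Eckstein1992,Bot2016MTA}) augmented by the inertial terms. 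Once the updates match, I would identify $Mu^{k+1}+Nv^k-b$ with $y^k-x^k$ (up to the scaling $\gamma$ and the inertial correction), so that conclusion (4) of Theorem \ref{theo1}, namely $y^k-x^k\to 0$, gives statement (ii) of the theorem: $Mu^{k+1}+Nv^k\to b$ strongly. Statement (iii) follows from weak convergence of $\{w^k\}=\{y^k\}$ (conclusion (3)), and (iv) from conclusion (7) together with $(\partial F)^{-1}=\partial F^{\ast}$ so that uniform convexity of $F^{\ast}$ or $G^{\ast}$ makes $A$ or $B$ uniformly monotone. For (i), I would recover $u^{k+1}$ and $v^{k+1}$ from the optimality conditions of lines 1 and 2 of Algorithm \ref{algo2}: these give $-M^{\ast}(\text{something}\to y^{\ast})\in\partial F(u^{k+1})$ and similarly for $v$, and combining the strong convergence $Mu^{k+1}+Nv^k\to b$ with the coercivity hypotheses $\|Mx\|\ge\theta_1\|x\|$, $\|Nx\|\ge\theta_2\|x\|$ (which force $M,N$ to be injective with closed range, hence boundedness of $\{u^k\},\{v^k\}$) plus a demiclosedness/weak-limit argument on the subdifferential graphs yields that every weak cluster point of $(u^k,v^k)$ solves the KKT system (\ref{1.4}); uniqueness of the limit then comes from the strong monotonicity argument applied in the dual, or from a standard Opial-type argument. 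Statement (v) is obtained by passing to the limit in the identity $L(u^{k+1},v^k,y^{\ast}) \to L(u^{\ast},v^{\ast},y^{\ast})$ combined with weak lower semicontinuity of $F,G$ and the fact that $x^k\rightharpoonup y^{\ast}$ (the last being immediate from $x^k = y^k+\gamma(Mu^{k+1}+Nv^k-b)$, conclusion (iii), and (ii)); strong duality $v(P)=v(D)$ comes from (\ref{3.1}) as already noted.

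The hardest part I anticipate is the bookkeeping in matching Algorithm \ref{algo2} line-for-line with (\ref{algo1}): the inertial ADMM of Bo\c{t}--Csetnek (\ref{inertial-ADMM-bot}) is already notationally heavy for the special case $N=-I$, $b=0$, and carrying a general $N$ and nonzero $b$ through the resolvent computations, while keeping track of which combination of $p^k$, $v^k-v^{k-1}$, and $Mu^{k}+Nv^{k-1}-b$ corresponds to the inertial term $\alpha_k(w^k-w^{k-1})$, is error-prone. I would organize this by first writing down the DR iteration for $A,B$ abstractly, then introducing the change of variables one block at a time, verifying at each stage that the $u$-minimization, the $v$-minimization, and the multiplier update reproduce lines 1--4 of Algorithm \ref{algo2}, and finally checking consistency of the initialization ($p^1=0$ corresponding to $w^0=w^1$ via Remark \ref{rema1}). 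A secondary technical point is justifying that the coercivity of $M$ and $N$ genuinely upgrades the weak convergence of the primal iterates to the full conclusion (i) rather than merely convergence of cluster points; here I would use that $N(v^{k+1}-v^k)\to 0$ (from conclusions (2) and (4) of Theorem \ref{theo1} after the substitution) together with $\|N(v^{k+1}-v^k)\|\ge\theta_2\|v^{k+1}-v^k\|$ to control the $v$-iterates, and similarly pin down $u^{k+1}$ from $Mu^{k+1} = b - Nv^k + o(1)$ and $\|Mx\|\ge\theta_1\|x\|$.
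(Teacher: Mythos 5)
Your overall strategy is exactly the paper's: realize Algorithm \ref{algo2} as the inertial Douglas--Rachford iteration (\ref{algo1}) applied to the dual (\ref{1.8}), then translate the conclusions of Theorem \ref{theo1}. However, your proposed dictionary contains concrete errors that would make the line-by-line matching fail. First, the paper takes $B=\partial(G^{\ast}\circ(-N^{\ast}))+b$ and $A=\partial(F^{\ast}\circ(-M^{\ast}))$, so the \emph{first} resolvent $J_{\gamma B}$ generates the $G$/$v$-block (via $v^{k}\in\partial G^{\ast}(-N^{\ast}y^{k})$) and the \emph{second} resolvent $J_{\gamma A}$ generates the $F$/$u$-block (via $u^{k+1}\in\partial F^{\ast}(-M^{\ast}x^{k})$, note the index shift); you have swapped the two operators and attached $\langle\cdot,b\rangle$ to $F^{\ast}$ rather than $G^{\ast}$. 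Since Douglas--Rachford is not symmetric in $A$ and $B$, the swapped assignment produces a different ADMM variant (blocks updated in the other order), not Algorithm \ref{algo2} as written. Second, and more seriously, the substitution ``$w^{k}\leftrightarrow y^{k}$'' is wrong: the correct relation, which is the crux of the whole derivation, is $w^{k}=y^{k}+\gamma b-p^{k}-\gamma Nv^{k}$ with $p^{k}=\alpha_{k}(w^{k}-w^{k-1})$ (equations (\ref{E3.1}), (\ref{E3.3}), (\ref{E3.4}) of the paper). Only through this relation does the update $w^{k+1}=w^{k}+\alpha_{k}(w^{k}-w^{k-1})+\lambda_{k}(x^{k}-y^{k})$ unwind into step 3 of Algorithm \ref{algo2}, with its $\gamma N(v^{k+1}-v^{k})$ term and the factor $(1+\alpha_{k+1})$ multiplying $\lambda_{k}(Mu^{k+1}+Nv^{k}-b)$, and into the $p$-update of step 4. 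With $w^{k}=y^{k}$ the multiplier update would collapse to $y^{k+1}=y^{k}+\alpha_{k}(y^{k}-y^{k-1})+\gamma\lambda_{k}(Mu^{k+1}+Nv^{k}-b)$, which is not Algorithm \ref{algo2}; likewise statement (iii) must come from Theorem \ref{theo1}(5) applied to $y^{k}=J_{\gamma B}(w^{k}+\alpha_{k}(w^{k}-w^{k-1}))$, since $w^{k}$ itself converges weakly to $\bar{w}$, not to $y^{\ast}=J_{\gamma B}\bar{w}$.

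Two smaller remarks. The identity $x^{k}-y^{k}=\gamma(Mu^{k+1}+Nv^{k}-b)$ (equation (\ref{e3.11})) is exact, with no ``inertial correction,'' and it is precisely what turns Theorem \ref{theo1}(4) into statement (ii). For statement (i), your observation that $N(v^{k+1}-v^{k})\to 0$ forces $v^{k+1}-v^{k}\to 0$ does not by itself yield weak convergence of $(v^{k})$; the paper instead extracts the weak limits of $Mu^{k+1}$ and $Nv^{k}$ from (\ref{e3.10}) and (\ref{E3.1}) and then uses $\|Mx\|\geq\theta_{1}\|x\|$, $\|Nx\|\geq\theta_{2}\|x\|$ (closed range plus bounded inverse on the range) to transfer these to $u^{k}$ and $v^{k}$ directly, with no cluster-point or Opial argument needed. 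Once the change of variables is corrected, the remainder of your plan for (iv) and (v) coincides with the paper's proof.
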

\vskip 2mm

\begin{proof}
Let
\begin{equation}\label{e3.3}
A=\partial(F^{\ast}\circ(-M^{\ast})),\ \textrm{ and } \ B=\partial(G^{\ast}\circ(-N^{\ast}))+b.
\end{equation}
From the first step of iteration scheme (\ref{algo1}), we have
\begin{align}\label{eq1}
y^{k}&=J_{\gamma B}(w^{k}+\alpha_{k}(w^{k}-w^{k-1}))\nonumber\\
&=prox_{\gamma(G^{\ast}(-N^{\ast})+\langle\cdot,b\rangle)}(w^{k}+\alpha_{k}(w^{k}-w^{k-1}))\nonumber\\
&=\arg\min_{y}\{\gamma(G^{\ast}(-N^{\ast}y)+\langle y,b\rangle)+\frac{1}{2}\|y-w^{k}-\alpha_{k}(w^{k}-w^{k-1})\|^{2}\}.
\end{align}
By the first-order optimality condition, we obtain from (\ref{eq1}) that
\begin{equation}\label{e3.5}
0\in -\gamma N\partial G^{\ast}(-N^{\ast}y^{k})+\gamma b+y^{k}-w^{k}-\alpha_{k}(w^{k}-w^{k-1}).
\end{equation}
We introduce the sequence $\{v^{k}\}_{k\geq 1}$ by
\begin{equation}\label{e3.6}
v^{k}\in \partial G^{\ast}(-N^{\ast}y^{k}),
\end{equation}
then, we have
\begin{equation}\label{E3.1}
0=-\gamma Nv^{k}+\gamma b+y^{k}-w^{k}-\alpha_{k}(w^{k}-w^{k-1}) \quad and \quad -N^{\ast}y^{k}\in \partial G(v^{k}).
\end{equation}
From the second step of iteration scheme (\ref{algo1}), we have
\begin{align}
x^{k}&=J_{\gamma A}(2y^{k}-w^{k}-\alpha_{k}(w^{k}-w^{k-1}))\nonumber\\
&=prox_{\gamma F^{\ast}(-M^{\ast})}(2y^{k}-w^{k}-\alpha_{k}(w^{k}-w^{k-1}))\nonumber\\
&=\arg\min_{x}\{\gamma F^{\ast}(-M^{\ast}x)+\frac{1}{2}\|x-2y^{k}+w^{k}+\alpha_{k}(w^{k}-w^{k-1})\|^{2}\}.
\end{align}
According to the first-order optimality condition, there are also
\begin{equation}\label{e3.9}
0\in -\gamma M\partial F^{\ast}(-M^{\ast}x^{k})+x^{k}-2y^{k}+w^{k}+\alpha_{k}(w^{k}-w^{k-1}).
\end{equation}
Again we introduce a new sequence $\{u^{k+1}\}_{k\geq1}$ by
\begin{equation}\label{E3.2}
u^{k+1}\in \partial F^{\ast}(-M^{\ast}x^{k}),
\end{equation}
 we can get
\begin{equation}\label{e3.10}
0=-\gamma Mu^{k+1}+x^{k}-2y^{k}+w^{k}+\alpha_{k}(w^{k}-w^{k-1})\quad and \quad -M^{\ast}x^{k}\in \partial F(u^{k+1}).
\end{equation}

From the first formula of (\ref{E3.1}) and (\ref{e3.10}), we obtain
\begin{equation}\label{e3.11}
x^{k}-y^{k} = \gamma Mu^{k+1} +\gamma Nv^{k} - \gamma b.
\end{equation}

Combining the second formula of (\ref{e3.10}) and (\ref{e3.11}), we have
\begin{align}
& 0\in \partial F(u^{k+1})+M^{\ast}x^{k};\nonumber\\
\Leftrightarrow &0\in \partial F(u^{k+1}) +M^{\ast}(y^{k}+\gamma Mu^{k+1}+\gamma Nv^{k}-\gamma b).\nonumber
\end{align}

Therefore, it is clear that
\begin{equation}\label{e3.12}
u^{k+1}=\arg\min_{u}\{F(u)+\langle y^{k},Mu\rangle+\frac{\gamma}{2}\|Mu+Nv^{k}-b\|^{2}\}.
\end{equation}
This is the first step of Algorithm \ref{algo2}.

Let $p^{k}=\alpha_{k}(w^{k}-w^{k-1})$, then the first formula of (\ref{E3.1}) can be rewritten as
\begin{equation}\label{E3.3}
w^{k}=y^{k}+\gamma b-p^{k}-\gamma Nv^{k},
\end{equation}
furthermore, we have
\begin{align}
p^{k+1}& =\alpha_{k+1}(w^{k+1}-w^{k});\nonumber\\
  &=\alpha_{k+1}(y^{k+1}+\gamma b-p^{k+1}-\gamma Nv^{k+1}-y^{k}-\gamma b+p^{k}+\gamma Nv^{k}),\nonumber
\end{align}
hence
\begin{equation}\label{E3.4}
p^{k+1}=\alpha_{k+1}(w^{k+1}-w^{k})=\frac{\alpha_{k+1}}{1+\alpha_{k+1}}(p^{k}+(y^{k+1}-y^{k})-\gamma N(v^{k+1}-v^{k})).
\end{equation}

From the third step of iteration scheme (\ref{algo1}), the first formula of (\ref{E3.1}), (\ref{e3.11}) and (\ref{E3.4}), we get
\begin{equation}\label{e3.13}
y^{k+1}=y^{k}+\alpha_{k+1}p^{k}+\gamma[N(v^{k+1}-v^{k})+(1+\alpha_{k+1})\lambda_{k}(Mu^{k+1}+ Nv^{k}- b)].
\end{equation}
This is the third step of Algorithm \ref{algo2}.

Substituting (\ref{e3.13}) into (\ref{E3.4}), we have
\begin{equation}\label{fourth-step}
p^{k+1}=\alpha_{k+1}[p^{k}+\gamma\lambda_{k}(Mu^{k+1}+Nv^{k}-b)].
\end{equation}
This is the fourth step of Algorithm \ref{algo2}.

Combining the second formula of (\ref{e3.10}) and (\ref{e3.13}), we obtain
\begin{align}
&0\in \partial G(v^{k+1})+N^{\ast}y^{k+1};\nonumber\\
&0\in \partial G(v^{k+1})+N^{\ast}(y^{k}+\alpha_{k+1}p^{k}+\gamma(N(v^{k+1}-v^{k})+(1+\alpha_{k+1})\lambda_{k}(Mu^{k+1}+ Nv^{k}- b))).\nonumber
\end{align}

Therefore, it is clear that
\begin{equation}\label{E3.5}
v^{k+1}=\arg\min_{v}\{ G(v)+\langle y^{k}+\alpha_{k+1}p^{k},Nv\rangle+\frac{\gamma}{2}\|N(v-v^{k})+(1+\alpha_{k+1})\lambda_{k}(Mu^{k+1}+ Nv^{k}- b)\|^{2}\}.
\end{equation}
This is the second step of Algorithm \ref{algo2}.

Therefore, Algorithm \ref{algo2} is equivalent to the inertial Douglas-Rachford splitting algorithm for the dual problem of the original problem (\ref{problem1}).

Next, we prove the convergence of Theorem \ref{theo2}. According to Theorem \ref{theo1}, there exists $\bar{w}\in H$ such that
\begin{equation}\label{e3.16}
w^{k}\rightharpoonup \bar{w} \quad as \quad k\rightarrow +\infty,
\end{equation}
\begin{equation}\label{e3.17}
w^{k+1}-w^{k}\rightarrow 0\quad as \quad k\rightarrow +\infty,
\end{equation}
\begin{equation}\label{e3.18}
y^{k}\rightharpoonup J_{\gamma B}\bar{w}\quad as \quad k\rightarrow +\infty,
\end{equation}
\begin{equation}\label{e3.19}
x^{k}\rightharpoonup J_{\gamma B}\bar{w}=J_{\gamma A}(2J_{\gamma B}\bar{w}-\bar{w})\quad as \quad k\rightarrow +\infty,
\end{equation}
\begin{equation}\label{e3.20}
y^{k}-x^{k}\rightarrow 0 \quad as \quad k\rightarrow +\infty.
\end{equation}

{\rm (i)}  From the first formula of (\ref{e3.10}), we have
\begin{align}
\gamma Mu^{k+1}=x^{k}-2y^{k}+w^{k}+\alpha_{k}(w^{k}-w^{k-1}),\nonumber
\end{align}
and from (\ref{e3.16}), (\ref{e3.17}), (\ref{e3.18}) and (\ref{e3.20}), we can get
\begin{equation}\label{eq3.21}
Mu^{k+1}\rightharpoonup \frac{1}{\gamma}(\bar{w}-J_{\gamma B}\bar{w}) \quad as \quad k\rightarrow +\infty.
\end{equation}
%

Moreover, by the first formula of (\ref{E3.1}), we have
\begin{align}
\gamma Nv^{k}=\gamma b+y^{k}-w^{k}-\alpha_{k}(w^{k}-w^{k-1}),\nonumber
\end{align}
and then through (\ref{e3.16}), (\ref{e3.17}) and (\ref{e3.18}), we derive that
\begin{equation}\label{eq3.23}
Nv^{k}\rightharpoonup \frac{1}{\gamma}(J_{\gamma B}\bar{w}-\bar{w})+b \quad as \quad k\rightarrow +\infty.
\end{equation}

Let
\begin{equation}\label{eq3.26}
Mu^{\ast}=\frac{1}{\gamma}(\bar{w}-J_{\gamma B}\bar{w})\ and \ Nv^{\ast}= \frac{1}{\gamma}(J_{\gamma B}\bar{w}-\bar{w})+b,
\end{equation}
because the operators $M$ and $N$ satisfy the condition $\|Mx\| \geq \theta_1 \|x\|$ and $\|Nx\| \geq \theta_2 \|x\|$ for all $x\in H$, according to (\ref{eq3.21}) and (\ref{eq3.23}), there exist $u^{\ast}$, $v^{\ast}$ such that
\begin{equation}\label{eq3.24}
u^{k}\rightharpoonup u^{\ast} \ and \ v^{k}\rightharpoonup v^{\ast} \quad as \quad k\rightarrow +\infty,
\end{equation}
and
\begin{equation}\label{eq3.25}
Mu^{\ast}+Nv^{\ast}=b.
\end{equation}

{\rm (ii)} It follows from (\ref{e3.11}) and (\ref{e3.20}) that
\begin{equation}\label{eq3.27}
Mu^{k+1}+Nv^{k}\rightarrow b \quad as \quad k\rightarrow +\infty
\end{equation}

{\rm (iii)} Let
\begin{equation}\label{eq3.28}
y^{\ast}=J_{\gamma B}\bar{w},
\end{equation}
from (\ref{e3.18}), we have
\begin{equation}\label{eq3.29}
y^{k}\rightharpoonup y^{\ast}.
\end{equation}

{\rm (iv)} According to (\ref{eq3.28}), we have
\begin{equation}\label{eq3.30}
\bar{w}\in y^{\ast}+\gamma By^{\ast},
\end{equation}
thus
\begin{align}\label{eq3.31}
Nv^{\ast}&=\frac{1}{\gamma}(J_{\gamma B}\bar{w}-\bar{w})+b;\nonumber\\
&=\frac{1}{\gamma}(y^{\ast}-\bar{w})+b;\nonumber\\
&\in -By^{\ast}+b.
\end{align}
Substitute $B=\partial(G^{\ast}\circ(-N^{\ast}))+b$ to (\ref{eq3.31}), we have
\begin{align}\label{eq3.32}
Nv^{\ast}\in N\partial G^{\ast}(-N^{\ast}y^{\ast})\Leftrightarrow 0 \in \partial G(v^{\ast})+N^{\ast}y^{\ast}.
\end{align}

Further, by (\ref{e3.19}) and (\ref{eq3.28}), we obtain
\begin{equation}\label{eq3.33}
y^{\ast}-\bar{w}\in \gamma Ay^{\ast},
\end{equation}
that is
\begin{align}\label{eq3.34}
Mu^{\ast}&=\frac{1}{\gamma}(\bar{w}-J_{\gamma B}\bar{w});\nonumber\\
&=\frac{1}{\gamma}(\bar{w}-y^{\ast});\nonumber\\
&\in -Ay^{\ast}.
\end{align}
Substitute $A=\partial(F^{\ast}\circ(-M^{\ast}))$ to (\ref{eq3.34}), we have
\begin{align}\label{eq3.35}
Mu^{\ast}\in M\partial F^{\ast}(-M^{\ast}y^{\ast})\Leftrightarrow 0 \in \partial F(u^{\ast})+M^{\ast}y^{\ast}.
\end{align}

According to (\ref{eq3.25}), (\ref{eq3.32}) and (\ref{eq3.35}), we prove that the existence point pair $(u^{\ast},v^{\ast},y^{\ast})$ satisfies the optimality condition (\ref{1.4}). Theorem \ref{theo2} (iv) can be obtained directly from Theorem \ref{theo1} (7).

{\rm (v)} We know that $F$ and $G$ are weakly lower semi-continuous (since $F$ and $G$ are convex) and therefore, from (i), we have
\begin{align}\label{e3.33}
\liminf_{k\rightarrow+\infty}(F(u^{k+1})+G(v^{k}))&\geq \liminf_{k\rightarrow+\infty} F(u^{k+1})+\liminf_{k\rightarrow+\infty} G(v^{k})\nonumber\\
&\geq F(u^{\ast})+G(v^{\ast})=v(P).
\end{align}

We deduce from the second formula of (\ref{E3.1}) and (\ref{e3.10}) that
\begin{equation}\label{e3.34}
G(v^{\ast})\geq G(v^{k})+\langle v^{\ast}-v^{k},-N^{\ast}y^{k}\rangle;
\end{equation}
\begin{equation}\label{e3.35}
F(u^{\ast})\geq F(u^{k+1})+\langle u^{\ast}-u^{k+1},-M^{\ast}x^{k}\rangle.
\end{equation}

Summing up (\ref{e3.34}) and (\ref{e3.35}), we obtain
\begin{equation}\label{e3.36}
v(P)\geq F(u^{k+1})+G(v^{k})+\langle u^{\ast}-u^{k+1},-M^{\ast}x^{k}\rangle+\langle v^{\ast}-v^{k},-N^{\ast}y^{k}\rangle,
\end{equation}
that is
\begin{align}\label{e3.37}
v(P)&\geq F(u^{k+1})+G(v^{k})+\langle Mu^{\ast}-Mu^{k+1},y^{k}-x^{k}\rangle\nonumber\\
&\quad\quad\quad\quad\quad\quad\quad+\langle Mu^{\ast}+Nv^{\ast}-Mu^{k+1}-Nv^{k},-y^{k}\rangle.
\end{align}

From (i), (ii), (iii), (\ref{e3.20}) and (\ref{eq3.25}), we have
\begin{equation}\label{e3.38}
\limsup_{k\rightarrow+\infty}(F(u^{k+1})+G(v^{k}))\leq v(P).
\end{equation}

Combining (\ref{e3.33}) and (\ref{e3.38}), we prove the first part of Theorem \ref{theo2} (v).

Again from the second formula of (\ref{E3.1}) and (\ref{e3.10}), we get
\begin{equation}\label{e3.39}
G(v^{k})+G^{\ast}(-N^{\ast}y^{k})=\langle -N^{\ast}y^{k},v^{k}\rangle;
\end{equation}
\begin{equation}\label{e3.40}
F(u^{k+1})+F^{\ast}(-M^{\ast}x^{k})=\langle -M^{\ast}x^{k},u^{k+1}\rangle.
\end{equation}

Summing up (\ref{e3.39}) and (\ref{e3.40}), we have
\begin{align}\label{e3.41}
F(u^{k+1})+G(v^{k})&=-F^{\ast}(-M^{\ast}x^{k}) -G^{\ast}(-N^{\ast}y^{k})+\langle -x^{k},Mu^{k+1}\rangle+\langle -y^{k},Nv^{k}\rangle;\nonumber\\
&=-F^{\ast}(-M^{\ast}x^{k}) -G^{\ast}(-N^{\ast}y^{k})+\langle y^{k}-x^{k},Mu^{k+1}\rangle+\langle -y^{k},Mu^{k+1}+Nv^{k}\rangle.\nonumber
\end{align}

Finally, taking into account (i), (ii), (\ref{e3.20}) and the first part of Theorem \ref{theo2} (v), we obtain
\begin{equation}\label{e3.42}
\lim_{k\rightarrow+\infty}(-F^{\ast}(-M^{\ast}x^{k}) -G^{\ast}(-N^{\ast}y^{k})-\langle y^{k},b\rangle)=v(P)=v(D).
\end{equation}
This completes the proof.
\end{proof}

\begin{theorem}\label{theo3.2}
Assuming (\ref{problem1}) has an optimal solution and the condition (\ref{3.1}) is satisfied. Let the bounded linear operators $M$ and $N$ satisfy the condition that $\exists \theta_1 >0, \theta_2 >0$ such that $\|Mx\| \geq \theta_1 \|x\|$ and $\|Nx\| \geq \theta_2 \|x\|$, for all $x\in H$. Consider the sequence generated by Algorithm \ref{algo2}. Let $0\leq\alpha_{k}\leq\alpha<1$ and $0<\underline{\lambda}\leq\lambda_{k}\leq\overline{\lambda}<2$. Let $\sum_{k=1}^{+\infty}\alpha_{k+1}\|p^{k}+\gamma\lambda_{k}(Mu^{k+1}+Nv^{k}-b)\|^{2}<+\infty$. Then there exists a point pair $(u^{\ast},v^{\ast},y^{\ast})$, which is the saddle point of Lagrange function, where $(u^{\ast},v^{\ast})$ is the optimal solution of (\ref{problem1}), $y^{\ast}$ is the optimal solution of (\ref{1.8}), and $v(P)=v(D)$. The following statements are true:
\vskip 1mm
\rm (i)\, $(u^{k},v^{k})_{k\geq1}$ converges weakly to $(u^{\ast},v^{\ast})$;

\rm (ii)\, $(Mu^{k+1}+Nv^{k})_{k\geq1}$ converges strongly to $b$;

\rm (iii)\, $(y^{k})_{k\geq1}$ converges weakly to $y^{\ast}$;

\rm (iv)\, if $F^{\ast}$ or $G^{\ast}$ is uniformly convex, the $(y^{k})_{k\geq1}$ converges strongly to unique optimal solution of (D);

\rm (v)\, $\lim_{k\rightarrow+\infty}F(u^{k+1})+G(v^{k})=v(P)=v(D)= \lim_{k\rightarrow+\infty}(-F^{\ast}(-M^{\ast}x^{k})-G^{\ast}(-N^{\ast}y^{k})-\langle y^{k},b\rangle)$,where the sequence $(x^{k})_{k\geq1}$ is defined by
$$
x^{k}=y^{k}+\gamma Mu^{k+1}+\gamma Nv^{k}-\gamma b,
$$
and $(x^{k})_{k\geq1}$ converges weakly to $y^{\ast}$.
\end{theorem}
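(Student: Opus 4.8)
The plan is to run the proof of Theorem \ref{theo2} essentially verbatim, with Theorem \ref{theo2.2} playing the role that Theorem \ref{theo1} played there. The first half of the proof of Theorem \ref{theo2} --- the derivation showing that the iterates $(u^{k},v^{k},y^{k},p^{k})$ of Algorithm \ref{algo2} coincide with those produced by the inertial Douglas--Rachford scheme (\ref{algo1}) applied to the maximally monotone operators $A=\partial(F^{\ast}\circ(-M^{\ast}))$ and $B=\partial(G^{\ast}\circ(-N^{\ast}))+b$, via the auxiliary sequences $x^{k}=y^{k}+\gamma Mu^{k+1}+\gamma Nv^{k}-\gamma b$ and $w^{k}=y^{k}+\gamma b-p^{k}-\gamma Nv^{k}$ and the setting $p^{k}=\alpha_{k}(w^{k}-w^{k-1})$ --- uses only the update rules of the algorithm together with first-order optimality conditions, and not the particular ranges of the parameters $\alpha_{k}$ and $\lambda_{k}$. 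Hence this equivalence carries over unchanged, and in particular so does the identity obtained by combining (\ref{E3.4}) with (\ref{fourth-step}), namely $p^{k+1}=\alpha_{k+1}(w^{k+1}-w^{k})=\alpha_{k+1}[\,p^{k}+\gamma\lambda_{k}(Mu^{k+1}+Nv^{k}-b)\,]$.

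Next I would verify the two hypotheses of Theorem \ref{theo2.2}. Hypothesis (i) is exactly the standing assumption $0\le\alpha_{k}\le\alpha<1$ and $0<\underline{\lambda}\le\lambda_{k}\le\overline{\lambda}<2$. For hypothesis (ii) I would use the identity above: when $\alpha_{k+1}>0$ it yields $\|w^{k+1}-w^{k}\|=\|p^{k}+\gamma\lambda_{k}(Mu^{k+1}+Nv^{k}-b)\|$, so that $\alpha_{k+1}\|w^{k+1}-w^{k}\|^{2}=\alpha_{k+1}\|p^{k}+\gamma\lambda_{k}(Mu^{k+1}+Nv^{k}-b)\|^{2}$, while for $\alpha_{k+1}=0$ both sides are $0$. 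Summing over $k$ and invoking the standing hypothesis $\sum_{k\ge1}\alpha_{k+1}\|p^{k}+\gamma\lambda_{k}(Mu^{k+1}+Nv^{k}-b)\|^{2}<+\infty$ gives $\sum_{k}\alpha_{k}\|w^{k}-w^{k-1}\|^{2}<+\infty$, which is (ii). From this we also get $\alpha_{k}\|w^{k}-w^{k-1}\|\to0$ (since $\alpha_{k}\le\alpha<1$), hence $p^{k}\to0$, and then, from the third line of (\ref{algo1}) together with Theorem \ref{theo2.2}(3), $w^{k+1}-w^{k}\to0$; this recovers (\ref{e3.17}), which is needed in the sequel even though Theorem \ref{theo2.2} does not assert it directly.

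With Theorem \ref{theo2.2} in force we obtain $w^{k}\rightharpoonup\bar{w}$, $y^{k}\rightharpoonup J_{\gamma B}\bar{w}$, $x^{k}\rightharpoonup J_{\gamma B}\bar{w}=J_{\gamma A}(2J_{\gamma B}\bar{w}-\bar{w})$ and $y^{k}-x^{k}\to0$, i.e. exactly relations (\ref{e3.16})--(\ref{e3.20}). From here the derivation of statements (i)--(v) is identical, line by line, to that in Theorem \ref{theo2}: the coercivity bounds $\|Mx\|\ge\theta_{1}\|x\|$ and $\|Nx\|\ge\theta_{2}\|x\|$ upgrade the weak limits of $\{Mu^{k+1}\}$ and $\{Nv^{k}\}$ to weak limits $u^{k}\rightharpoonup u^{\ast}$, $v^{k}\rightharpoonup v^{\ast}$ with $Mu^{\ast}+Nv^{\ast}=b$; relation (\ref{e3.11}) together with $y^{k}-x^{k}\to0$ gives the strong convergence $Mu^{k+1}+Nv^{k}\to b$; putting $y^{\ast}=J_{\gamma B}\bar{w}$ and using $\bar{w}\in y^{\ast}+\gamma By^{\ast}$ and $y^{\ast}-\bar{w}\in\gamma Ay^{\ast}$ with the definitions of $A$ and $B$ produces the KKT system (\ref{1.4}); statement (iv) is Theorem \ref{theo2.2}(6); and statement (v) follows from the weak lower semicontinuity of $F$ and $G$ and the Fenchel--Young equalities exactly as in (\ref{e3.33})--(\ref{e3.42}).

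The only genuinely new ingredient is the verification of hypothesis (ii) of Theorem \ref{theo2.2}, so that is where I expect the main (mild) obstacle: one must handle the bookkeeping when $\alpha_{k+1}=0$ --- in which case $w^{k+1}-w^{k}$ cannot be recovered from $p^{k+1}$, and one argues directly that the corresponding summand vanishes --- and fix the convention for the initial data ($p^{1}=0$, equivalently $w^{0}=w^{1}$, as in Remark \ref{rema1}). Everything past that point is a transcription of the proof of Theorem \ref{theo2}, so I do not anticipate any further difficulty.
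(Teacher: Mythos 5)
Your proposal is correct and follows exactly the route the paper intends: the paper simply states that the proof is ``similar to Theorem \ref{theo2}'' and omits it, and your argument is that omitted proof spelled out, with Theorem \ref{theo2.2} replacing Theorem \ref{theo1}. The one genuinely new step --- translating the summability hypothesis on $\alpha_{k+1}\|p^{k}+\gamma\lambda_{k}(Mu^{k+1}+Nv^{k}-b)\|^{2}$ into condition (ii) of Theorem \ref{theo2.2} via $p^{k+1}=\alpha_{k+1}(w^{k+1}-w^{k})$, and recovering $w^{k+1}-w^{k}\to 0$ from $p^{k}\to 0$ and $y^{k}-x^{k}\to 0$ --- is handled correctly, including the degenerate case $\alpha_{k+1}=0$.
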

\begin{proof}
The proof of Theorem \ref{theo3.2} is similar to Theorem \ref{theo2}, so we omit it here.
\end{proof}

\begin{remark}
Notice that, in finite-dimensional case, the assumption on $M$ and $N$ in Theorem \ref{theo2} and Theorem \ref{theo3.2} means that $M$ and $N$ are matrices with full column rank.
\end{remark}

\begin{remark}\label{rema3.1}
As we can see, when $\alpha_{k}\equiv 0$, the iterative sequences of Algorithm \ref{algo2} reduces to the GADMM (\ref{GADMM}).

Suppose that the matrix $N$ is full column rank, let $b-Nv=z$, then $Mu=z$ and $v=(N^{\ast}N)^{-1}N^{\ast}(b-z)$. Let $H(z)=G((N^{\ast}N)^{-1}N^{\ast}(b-z))=G(v)$, then problem (\ref{problem1}) is equivalent to problem (\ref{problem22}), i.e., $\min_{u\in H_{1}}F(u)+H(Mu)$.
Therefore, we can directly apply the algorithm (\ref{inertial-ADMM-bot}) and obtain the following iteration scheme:
\begin{equation}\label{algo2-equivalent-case}
 \left\{
\begin{aligned}
 & u^{k+1} =\arg\min_{u}\{ F(u)+\langle y^{k}-\alpha_{k}(y^{k}-y^{k-1}-\gamma N(v^{k}-v^{k-1})),Mu\rangle +\frac{\gamma}{2}\|Mu+Nv^{k}-b\|^{2}\},\\
 & \bar{v}^{k+1}=-\frac{\alpha_{k+1}}{\gamma}(N^{\ast}N)^{-1}N^{\ast}[\gamma \lambda_{k}(Mu^{k+1}+Nv^{k}-b)+(1-\lambda_{k})\alpha_{k}(y^{k}-y^{k-1}-\gamma N(v^{k}-v^{k-1}))],\\
 & v^{k+1} =\arg\min_{v}\{G(v+\bar{v}^{k+1})+\langle y^{k}+(1-\lambda_{k})\alpha_{k}(y^{k}-y^{k-1}-\gamma N(v^{k}-v^{k-1})),Nv\rangle \\
 & \quad \quad \quad \quad \quad \quad \quad \quad \quad \quad+\frac{\gamma}{2}\|N(v-v^{k})+\lambda_{k}(Mu^{k+1}+Nv^{k}-b)\|^{2}\},\\
 & y^{k+1}=y^{k}+\gamma[N(v^{k+1}-v^{k})+\lambda_{k}(Mu^{k+1}+Nv^{k}-b)] +(1-\lambda_{k})\alpha_{k}(y^{k}-y^{k-1}-\gamma N(v^{k}-v^{k-1})).
\end{aligned}
\right.
\end{equation}

In the following, we prove that Algorithm \ref{algo2} is equivalent to (\ref{algo2-equivalent-case}). In fact, according to (\ref{E3.1}) and (\ref{e3.3}), we have $\gamma Nv^{k}=\gamma b+y^{k}-w^{k}-\alpha_{k}(w^{k}-w^{k-1})$ and  $\gamma Nv^{k}=y^{k}+\gamma b -w^{k}-p^{k}$, respectively. Let $\gamma Nt^{k} = \gamma b+y^{k}-w^{k}$ and $\gamma N\bar{t}^{k} = -\alpha_{k}(w^{k}-w^{k-1})$, we obtain
\begin{equation}\label{R1}
\gamma Nv^{k}=\gamma Nt^{k}+\gamma N\bar{t}^{k},\,\  p^{k}=-\gamma N\bar{t}^{k} \,\ and \,\ \gamma N\bar{t}^{k} = -\alpha_{k}(y^{k}-y^{k-1}-\gamma N(t^{k}-t^{k-1})).
\end{equation}

Substituting the first formula of (\ref{R1}) into step 1 of Algorithm \ref{algo2}, we obtain
\begin{equation}\label{R2}
u^{k+1}=\arg\min_{u}\{ F(u)+\langle y^{k},Mu\rangle+\frac{\gamma}{2}\|Mu+N(t^{k}+\bar{t}^{k})-b\|^{2}\},
\end{equation}
and then substitute the last formula of (\ref{R1}) into (\ref{R2}), we get
\begin{equation}\label{R3}
u^{k+1}=\arg\min_{u}\{ F(u)+\langle y^{k}-\alpha_{k}(y^{k}-y^{k-1}-\gamma N(t^{k}-t^{k-1})),Mu\rangle+\frac{\gamma}{2}\|Mu+Nt^{k}-b\|^{2}\}.
\end{equation}

Substituting the (\ref{R1}) into step 4 of Algorithm \ref{algo2}, it is easy to get
\begin{equation}\label{R4}
\begin{aligned}
\bar{t}^{k+1}& =-\frac{1}{\gamma}(N^{\ast}N)^{-1}N^{\ast}p^{k+1} \\
&= -\frac{\alpha_{k+1}}{\gamma}(N^{\ast}N)^{-1}N^{\ast}(-\gamma N\bar{t}^{k}+\gamma\lambda_{k}(Mu^{k+1}+N(t^{k}+\bar{t}^{k})-b)) \\
&= -\frac{\alpha_{k+1}}{\gamma}(N^{\ast}N)^{-1}N^{\ast}[\gamma \lambda_{k}(Mu^{k+1}+Nt^{k}-b)+(1-\lambda_{k})\alpha_{k}(y^{k}-y^{k-1}-\gamma N(t^{k}-t^{k-1}))].
\end{aligned}
\end{equation}

Similarly, substituting the (\ref{R1}) into step 2 of Algorithm \ref{algo2}, we have
\begin{align}\label{R5}
t^{k+1} =\arg&\min_{t}\{G(t+\bar{t}^{k+1})+\langle y^{k}-\gamma\alpha_{k+1}N\bar{t}^{k},Nt\rangle \\ &+\frac{\gamma}{2}\|N(t+\bar{t}^{k+1}-t^{k}-\bar{t}^{k}) +(1+\alpha_{k+1})\lambda_{k}(Mu^{k+1}+N(t^{k}+\bar{t}^{k})-b)\|^{2}\},
\end{align}
which implies that
\begin{align}\label{R6}
t^{k+1} =\arg\min_{t}\{G(t+\bar{t}^{k+1})&+\langle y^{k}+(1-\lambda_{k})\alpha_{k}(y^{k}-y^{k-1}-\gamma N(t^{k}-t^{k-1})),Nt\rangle \\
 & +\frac{\gamma}{2}\|N(t-t^{k})+\lambda_{k}(Mu^{k+1}+Nt^{k}-b)\|^{2}\}.
\end{align}

Finally, combining steps 3 of Algorithm \ref{algo2} and (\ref{R1}), we obtain
\begin{align}\label{R7}
y^{k+1}=y^{k}+\gamma[N(t^{k+1}-t^{k})+\lambda_{k}(Mu^{k+1}+Nt^{k}-b)] +(1-\lambda_{k})\alpha_{k}(y^{k}-y^{k-1}-\gamma N(t^{k}-t^{k-1})).
\end{align}

Compared with (\ref{algo2-equivalent-case}), it is obvious that the iteration scheme of Algorithm \ref{algo2} is more concise.
\end{remark}
\vskip 2mm

\section{Numerical experiments}
\label{numer_test}

In this section, we carry out simulation experiments and compare the proposed algorithm (Algorithm \ref{algo2}) with other state-of-the-art algorithms include the classical ADMM \cite{Yuanx2013PJO}, GADMM \cite{Eckstein1992}, the inertial ADMM of Chen et al. \cite{Chen2015} (iADMM\_Chen). All the experiments are conducted on 64-bit Windows 10 operating system with an Inter Core(TM) i5-6500 CPU and 8GB memory. All the codes are tested in MATLAB R2016a.

\subsection{Robust principal component pursuit (RPCP) problem}

The robust principal component analysis (RPCA) problem was first introduced by Cand\`{e}s et al. \cite{Candes2009ACM}, which can be formulated as the following optimization model:
\begin{equation}
\begin{aligned}\label{e4.1}
& \min_{u,v}\quad rank(u)+\mu \|v\|_{0} \\
& s.t. \quad u+v=b.
\end{aligned}
\end{equation}
The objective function in (\ref{e4.1}) includes the rank of the matrix $u$ and the $\ell_{0}$-norm of matrix $v$, and $\mu>0$ is the penalty parameter balancing the low rank and sparsity. The RPCA (\ref{e4.1}) seeks to decompose a matrix $b$ into two parts: one is low-rank and the other is sparse. It has wide applications in image and video processing and many other fields. We refer interested readers to \cite{Bouwmans2014,Masq2018} for a comprehensive review of RPCA and its variants.  It is known that the original RPCA (\ref{e4.1}) is NP-hard. By using convex relaxation technique, the rank function of the matrix is usually replaced by the nuclear norm of the matrix, and the $\ell_{0}$-norm of the matrix is replaced by the $\ell_{1}$-norm of the matrix. Therefore, we can get the following convex optimization model:
\begin{equation}
\begin{aligned}\label{e4.2}
& \min_{u,v} \quad \|u\|_{\ast}+\mu\|v\|_{1} \\
& s.t. \quad u+v=b,
\end{aligned}
\end{equation}
where $\|u\|_{\ast}=\sum^{n}_{k=1}\sigma_{k}(u)$ is the nuclear norm of the matrix and $\sigma_{k}(u)$ represents the $k$ singular value of the matrix, and $\|v\|_{1} = \sum_{ij}|v_{ij}|$. Under certain conditions, problem (\ref{e4.2}) is equivalent to (\ref{e4.1}). See for example \cite{Candes2009ACM,Chandrasekaran2011SIAM}. The optimization problem is usually called robust principal component pursuit (RPCP). In fact,  let $F(u)=\|u\|_{\ast}$, $G(v)=\mu\|v\|_{1}$ and $M=N=I$. Then the RPCP (\ref{e4.2}) is a special case of the general problem (\ref{problem1}). Therefore, the classical ADMM algorithm, GADMM algorithm and inertial ADMM algorithms (includes iADMM\_Chen and Algorithm \ref{algo2}) can be used to solve the convex optimization problem (\ref{e4.2}).

We follow \cite{Candes2009ACM} to generate the simulation data. In the experiment, a low rank matrix is randomly generated by the following method. Firstly, two long strip matrices $L=randn(m,r)$ and $R=randn(m,r)$ are randomly generated, and then $u^{\ast}=LR^{T}$ is calculated, where $m$ and $r$ are the order and rank of matrix $u^{\ast}$, respectively. At the same time, a sparse matrix $v^{\ast}$ with uniform distribution of non-zero elements and uniform distribution of values between $[-500,500]$ is generated. Finally, the target matrix is generated by $b=u^{\ast}+v^{\ast}$.

\subsection{Parameters setting}

In this part, we show how to choose parameters for the studied algorithms. Let $\mu=1/\sqrt{m}$, where $m$ is the order of the matrix. For the common parameter $\gamma$ of classical ADMM, GADMM, iADMM\_Chen and Algorithm \ref{algo2}, we take the value of $0.01$. We show the relationship between the experimental results of iADMM\_Chen and Algorithm \ref{algo2} with the first condition (Algorithm \ref{algo2}-1) and the inertia parameter $\alpha_{k}$. Subsequent parameter selection will be based on this. For the private parameters of the Algorithm \ref{algo2}-1, we fix $\sigma= 0.01$, $\delta$ and relaxation parameter $\lambda_{k}$ are
\begin{equation}\label{e4.3}
\delta=1+\frac{\alpha^{2}(1+\alpha)+\alpha\sigma}{1-\alpha^{2}}\quad and\quad \lambda_{k}=2\frac{\delta-\alpha[\alpha(1+\alpha)+\alpha\delta+\sigma]}{\delta[1+\alpha(1+\alpha)+\alpha\delta+\sigma]},
\end{equation}
where $\alpha$ takes four different values 0.05, 0.1, 0.2 and 0.3, respectively, and let inertia parameter $\alpha_{k}=\alpha$. And the inertia parameter $\alpha_{k}$ of iADMM\_Chen are the same as that of Algorithm \ref{algo2}-1. The different selection of inertia parameters are listed in Table \ref{table1}. In the experiment, the order of objective matrix $b$ is $m=1000$, the rank of low rank matrix $u^{\ast}$ is $r=0.1m$, and the sparsity of sparse matrix $v^{\ast}$ is $\|v^{\ast}\|_{0}=0.05m^{2}$, respectively.
\begin{table}[htbp]
\footnotesize
\centering
\caption{Parameters selection of the iADMM\_Chen and Algorithm \ref{algo2}-1}
\begin{tabular}{|c|c|c|c|}
\hline
 Methods & $\gamma$  &  Inertia parameter $\alpha_{k}$ &  Relaxation parameter $\lambda_{k}$ \\
\hline
 \multirow{4}[1]{*}{Algorithm \ref{algo2}-1} &\multirow{4}[1]{*}{$0.01$} & $0.05$ & $1.7874$  \\ \cline{3-4}
  & & $0.1$ & $1.6019$ \\  \cline{3-4}
  & & $0.2$ & $1.2496$ \\  \cline{3-4}
  & & $0.3$ & $0.9243$ \\
  \hline
 \multirow{4}[1]{*}{iADMM\_Chen} & \multirow{4}[1]{*}{$0.01$} & $0.05$ & None \\
  \cline{3-4}
  & & $0.1$ & None \\ \cline{3-4}
  & & $0.2$ & None \\ \cline{3-4}
  & & $0.3$ & None \\
\hline
\end{tabular}\label{table1}
\end{table}

We define the relative error $rel\ u$, $rel\ v$ and $rel\ b$ as the stopping criterion, i.e.,
\begin{align}\label{e4.4}
rel\, u:=\frac{\|u^{k+1}-u^{k}\|_{F}}{\|u^{k}\|_{F}}&,\ rel\, v:=\frac{\|v^{k+1}-v^{k}\|_{F}}{\|v^{k}\|_{F}},\ rel\, b:=\frac{\|b^{k+1}-b^{k}\|_{F}}{\|b^{k}\|_{F}},\nonumber\\
& \max(rel\,u , rel\,v , rel\,b)\leq \varepsilon \nonumber
\end{align}
where $\varepsilon$ is a given small constant.

\begin{table}[htbp]
\footnotesize
\centering
\caption{Numerical experimental results of iADMM\_Chen and Algorithm \ref{algo2}-1 under different inertia parameters $\alpha_{k}$($rel\ u^{\ast}$ and $rel\ v^{\ast}$ are defined as $\frac{\|u^{k}-u^{\ast}\|_{F}}{\|u^{\ast}\|_{F}}$ and $\frac{\|v^{k}-v^{\ast}\|_{F}}{\|v^{\ast}\|_{F}}$, respectively).}
\begin{tabular}{c|c|ccccccccc}
\hline
\multirow{2}[1]{*}{Methods} & \multirow{2}[1]{*}{$\alpha_{k}$} &\multicolumn{4}{c}{$\varepsilon=1e-5$} & & \multicolumn{4}{c}{$\varepsilon=1e-7$} \\
 \cline{3-6} \cline{8-11}
& & $k$ & $rel\ u^{\ast}$ & $rel\ v^{\ast}$ & $rank(u^{\ast})$ & & $k$ & $rel\ u^{\ast}$ & $rel\ v^{\ast}$ & $rank(u^{\ast})$ \\
\hline
\multirow{4}[1]{*}{Algorithm \ref{algo2}-1} & 0.05 & 70 & 5.3242e-6 & 1.2665e-6 & 100 & & 101 & 1.7586e-6 & 6.0458e-7 & 100 \\
\cline{2-11}
& 0.1 & 56 & 6.8826e-6 & 1.8277e-6 & 100 & & 96 & 1.7588e-6 & 6.0457e-7 & 100 \\
\cline{2-11}
& 0.2 & 49 & 1.3062e-5 & 4.1950e-6 & 100 & & 69 & 4.8588e-6 & 1.7194e-6 & 100 \\
\cline{2-11}
& 0.3 & 53 & 1.7610e-5 & 5.6121e-6 & 100 & & 77 & 4.8727e-6 & 1.7199e-6 & 100 \\
\hline
\multirow{4}[1]{*}{iADMM\_Chen} & 0.05 & 55 & 3.1465e-5 & 9.6907e-6 & 100 & & 91 & 4.8607e-6 & 1.7197e-6 & 100 \\
\cline{2-11}
& 0.1 & 53 & 2.9038e-5 & 9.6133e-6 & 100 & & 87 & 4.8698e-6 & 1.7199e-6 & 100 \\
\cline{2-11}
& 0.2 & 49 & 2.8558e-5 & 9.1194e-6 & 100 & & 78 & 4.8658e-6 & 1.7196e-6 & 100 \\
\cline{2-11}
& 0.3 & 50 & 1.7318e-5 & 5.6383e-6 & 100 & & 72 & 4.8530e-6 & 1.7192e-6 & 100 \\
\hline
\end{tabular}\label{table2}
\end{table}

From Table \ref{table2}, we can see that when the inertia parameters $\alpha_{k}$ of iADMM\_Chen and Algorithm \ref{algo2}-1 are $0.3$ and $0.2$, respectively, the experimental results are the best. In the following experiments, we fix the inertia parameters $\alpha_{k}$ of iADMM\_Chen and Algorithm \ref{algo2}-1 to $0.3$ and $0.2$, respectively. The parameters of classical ADMM, GADMM, and Algorithm \ref{algo2} with the second condition (Algorithm \ref{algo2}-2) are defined as follows in Table \ref{table4}.

\begin{table}[htbp]
\small
\centering
\caption{Parameters selection of the compared iterative algorithms.}
\begin{tabular}{c|c|c|c}
\hline
 Methods & $\gamma$  & $\lambda_{k}$ &  Inertia parameter $\alpha_{k}$ \\
 \hline
 \hline
 ADMM  & \multirow{5}[1]{*}{$0.01$} & None & None \\
 GADMM &  & 1.6 & None \\
 iADMM\_Chen  &  & None & 0.3 \\
 Algorithm \ref{algo2}-1 &  & 1.2496 & 0.2 \\
 Algorithm \ref{algo2}-2 & & 1.5 & $\min\{\frac{1}{k^{2}\|p^{k}+\gamma\lambda_{k}(Mu^{k+1} + Nv^{k} -b)\|^{2}},0.05\}$ \\
\hline
\end{tabular}\label{table4}
\end{table}

\subsection{Results and discussions}

In the experiment, the order of objective matrix $b$ is $m=500$, $m=800$ and $m=1000$, the rank of low rank matrix $u^{\ast}$ is $r=0.05m$ and $r=0.1m$, and the sparsity of sparse matrix $v^{\ast}$ is $\|v^{\ast}\|_{0}=0.05m^{2}$ and $\|v^{\ast}\|_{0}=0.1m^{2}$, respectively.

Table \ref{table3} is a comparison of the numerical experimental results of the classical ADMM, GADMM, iADMM\_Chen and Algorithm \ref{algo2} (include Algorithm \ref{algo2}-1 and Algorithm \ref{algo2}-2). We conclude from Table \ref{table3} that the two inertial ADMMs (iADMM\_Chen and Algorithm \ref{algo2}) and the GADMM are better than the classical ADMM in terms of iteration numbers and accuracy. The iADMM\_Chen and Algorithm \ref{algo2} are similar in  terms of accuracy.  The proposed Algorithm \ref{algo2} is  better than iADMM\_Chen in the number of iterations in most cases. Besides, the proposed Algorithm \ref{algo2} is also comparable with the GADMM (\ref{GADMM}). In some cases, the number of iteration of the GADMM is higher than that of Algorithm \ref{algo2}. For example, when $m=800$, $rank(u^{\ast})=0.05m$ and $\|v^{\ast}\|_{0}=0.05m^{2}$, and $m=1000$, $rank(u^{\ast})=0.05m$ and $\|v^{\ast}\|_{0}=0.1m^{2}$ and so on. But, in most cases, the number of iteration of the GADMM is less than Algorithm \ref{algo2}.

\begin{table}[htbp]
\tiny
\newcommand{\tabincell}[2]{\begin{tabular}{@{}#1@{}}#2\end{tabular}}  
\centering
\caption{Comparison of numerical experimental results of ADMM, GADMM, iADMM\_Chen and Algorithm \ref{algo2}
($rel\ u^{\ast}$ and $rel\ v^{\ast}$ are defined as $\frac{\|u^{k}-u^{\ast}\|_{F}}{\|u^{\ast}\|_{F}}$ and $\frac{\|v^{k}-v^{\ast}\|_{F}}{\|v^{\ast}\|_{F}}$, respectively).}
\begin{tabular}{c|c|ccccc}
\hline
  & $m$ & Methods & $k$ & $rel\ u^{\ast}$ & $rel\ v^{\ast}$ & $rank(u^{k})$  \\
\hline
  \multirow{15}[1]{*}{\tabincell{c}{$rank(u^{\ast})=0.05m$  \\ $\|v^{\ast}\|_{0}=0.05m^{2}$ \\ $\varepsilon=1e-7$  }} & \multirow{5}[1]{*}{500 } & ADMM & 58 & 1.6323e-5 & 3.6376e-6 & 25 \\ \cline{3-7}
  & &GADMM & 45 & 1.6199e-5 & 3.6358e-6 & 25 \\ \cline{3-7}
  & & iADMM\_Chen & 46 & 1.6150e-5 & 3.6351e-6 & 25 \\ \cline{3-7}
  & & Algorithm \ref{algo2}-1 & 48 & 1.6153e-5 & 3.6351e-6 &  25 \\ \cline{3-7}
  & & Algorithm \ref{algo2}-2 & 45 & 1.6151e-5 & 3.6363e-6 &  25 \\ \cline{2-7}
  & \multirow{5}[1]{*}{800} & ADMM & 89 & 5.3299e-6 & 1.5083e-6 & 40 \\ \cline{3-7}
  & &GADMM & 76 & 2.3394e-6 & 6.5742e-7 & 40 \\ \cline{3-7}
  & & iADMM\_Chen & 68 & 5.3209e-6 & 1.5081e-6 &  40 \\ \cline{3-7}
  & & Algorithm \ref{algo2}-1 & 63 & 5.3177e-6 & 1.5081e-6 & 40 \\ \cline{3-7}
  & & Algorithm \ref{algo2}-2 & 64 & 5.3180e-6 & 1.5081e-6 & 40 \\ \cline{2-7}
  & \multirow{5}[1]{*}{1000} & ADMM & 76 & 6.3916e-6 & 2.0164e-6 & 50 \\ \cline{3-7}
  & &GADMM & 57 & 6.3510e-6 & 2.0155e-6 & 50 \\ \cline{3-7}
  & & iADMM\_Chen & 57 & 6.3514e-6 & 2.0155e-6 & 50 \\ \cline{3-7}
  & & Algorithm \ref{algo2}-1 & 55 & 6.3540e-6 & 2.0156e-6 &  50  \\ \cline{3-7}
  & & Algorithm \ref{algo2}-2 & 57 & 6.3568e-6 & 2.0156e-6 &  50 \\
\hline
  \multirow{15}[1]{*}{\tabincell{c}{$rank(u^{\ast})=0.05m$ \\ $\|v^{\ast}\|_{0}=0.1m^{2}$ \\ $\varepsilon=1e-7$ }} & \multirow{5}[1]{*}{500 } & ADMM & 89 & 1.7912e-5 & 2.8058e-6 & 25 \\ \cline{3-7}
  & &GADMM & 62 & 1.7661e-5 & 2.8031e-6 & 25 \\ \cline{3-7}
  & & iADMM\_Chen & 68 & 1.7626e-5 & 2.8027e-6 & 25\\ \cline{3-7}
  & & Algorithm \ref{algo2}-1 & 64 & 1.7658e-5 & 2.8031e-6 &  25 \\ \cline{3-7}
  & & Algorithm \ref{algo2}-2 & 65 & 1.7662e-5 & 2.8031e-6 &  25 \\ \cline{2-7}
  & \multirow{5}[1]{*}{800} & ADMM & 118 & 8.9767e-6 & 1.7795e-6 & 40 \\ \cline{3-7}
  & &GADMM & 90 & 4.9539e-6 & 9.9398e-7 & 40 \\ \cline{3-7}
  & & iADMM\_Chen & 94 & 4.9658e-6 & 9.9416e-7 &  40 \\ \cline{3-7}
  & & Algorithm \ref{algo2}-1 & 92 & 4.9539e-6 & 9.9398e-7 & 40 \\ \cline{3-7}
  & & Algorithm \ref{algo2}-2 & 93 & 4.9641e-6 & 9.9412e-7 & 40 \\ \cline{2-7}
  & \multirow{5}[1]{*}{1000} & ADMM & 127 & 5.6708e-6 & 1.1938e-6 & 50 \\ \cline{3-7}
  & &GADMM & 109 & 3.4869e-6 & 7.5976e-7 & 50 \\ \cline{3-7}
  & & iADMM\_Chen & 101 & 4.4912e-6 & 9.8826e-7 &  50\\ \cline{3-7}
  & & Algorithm \ref{algo2}-1 & 98 & 4.4909e-6 & 9.8826e-7 &   50 \\ \cline{3-7}
  & & Algorithm \ref{algo2}-2 & 98 & 4.4898e-6 & 9.8825e-7 & 50 \\
\hline
  \multirow{15}[1]{*}{\tabincell{c}{$rank(u^{\ast})=0.1m$ \\ $\|v^{\ast}\|_{0}=0.05m^{2}$ \\ $\varepsilon=1e-7$ }} & \multirow{5}[1]{*}{500 } & ADMM & 68 & 7.8951e-6 & 1.7185e-6 & 50 \\ \cline{3-7}
  & &GADMM & 49 & 7.8838e-6 & 1.7181e-6 & 50 \\ \cline{3-7}
  & & iADMM\_Chen & 54 & 7.8842e-6 & 1.7182e-6 & 50\\ \cline{3-7}
  & & Algorithm \ref{algo2}-1 & 51 & 7.8840e-6 & 1.7181e-6 &  50 \\ \cline{3-7}
  & & Algorithm \ref{algo2}-2 & 50 & 7.8838e-6 & 1.7181e-6 & 50 \\ \cline{2-7}
  & \multirow{5}[1]{*}{800} & ADMM & 78 & 7.4927e-6 & 2.1961e-6 &  80\\ \cline{3-7}
  & &GADMM & 60 & 4.3186e-6 & 1.2655e-6 & 80 \\ \cline{3-7}
  & & iADMM\_Chen & 68 & 4.3188e-6 & 1.2655e-6 &   80\\ \cline{3-7}
  & & Algorithm \ref{algo2}-1 & 64 & 4.3262e-6 & 1.2657e-6 & 80 \\ \cline{3-7}
  & & Algorithm \ref{algo2}-2 & 64 & 4.3187e-6 & 1.2655e-6 & 80 \\ \cline{2-7}
  & \multirow{5}[1]{*}{1000} & ADMM & 95 & 5.4043e-6 & 1.8768e-6 &  100\\ \cline{3-7}
  & &GADMM & 69 & 4.8821e-6 & 1.7201e-6 & 100 \\ \cline{3-7}
  & & iADMM\_Chen & 72 & 4.8530e-6 & 1.7192e-6 &  100\\ \cline{3-7}
  & & Algorithm \ref{algo2}-1 & 69 & 4.8588e-6 & 1.7194e-6 &    100\\ \cline{3-7}
  & & Algorithm \ref{algo2}-2 & 70 & 4.8735e-6 & 1.7200e-6 & 100 \\
\hline
  \multirow{15}[1]{*}{\tabincell{c}{$rank(u^{\ast})=0.1m$ \\ $\|v^{\ast}\|_{0}=0.1m^{2}$ \\ $\varepsilon=1e-7$ }} & \multirow{5}[1]{*}{500 } & ADMM & 104 & 8.1931e-6 & 1.2617e-6 & 50 \\ \cline{3-7}
  & &GADMM & 84 & 6.3919e-6 & 9.9343e-7 & 50 \\ \cline{3-7}
  & & iADMM\_Chen & 86 & 6.3780e-6 & 9.9311e-7 & 50\\ \cline{3-7}
  & & Algorithm \ref{algo2}-1 & 88 & 6.3918e-6 & 9.9343e-7 &  50 \\ \cline{3-7}
  & & Algorithm \ref{algo2}-2 & 76 & 8.2038e-6 & 1.2620e-6 & 50 \\ \cline{2-7}
  & \multirow{5}[1]{*}{800} & ADMM & 138 & 7.3081e-6 & 1.4414e-6 & 80 \\ \cline{3-7}
  & &GADMM & 103 & 2.1651e-6 & 4.6292e-7 & 80 \\ \cline{3-7}
  & & iADMM\_Chen & 113 & 2.1650e-6 & 4.6292e-7 &  80 \\ \cline{3-7}
  & & Algorithm \ref{algo2}-1 & 107 & 2.1652e-6 & 4.6292e-7 &  80\\ \cline{3-7}
  & & Algorithm \ref{algo2}-2 & 107 & 2.1690e-6 & 4.6302e-7 & 80 \\ \cline{2-7}
  & \multirow{5}[1]{*}{1000} & ADMM & 140 & 5.0963e-6 & 1.1111e-6 & 100 \\ \cline{3-7}
  & &GADMM & 103 & 1.8990e-6 & 4.3722e-7 & 100 \\ \cline{3-7}
  & & iADMM\_Chen & 99 & 5.0276e-6 & 1.1086e-6 &  100\\ \cline{3-7}
  & & Algorithm \ref{algo2}-1 & 104 & 1.9001e-6 & 4.3725e-7 &  100  \\ \cline{3-7}
  & & Algorithm \ref{algo2}-2 & 107 & 1.8990e-6 & 4.3722e-7 & 100 \\
\hline
\end{tabular}\label{table3}
\end{table}

\section{Conclusions}

The ADMM is a popular method for solving many structural convex optimization problems.
In this paper, we proposed an inertial ADMM for solving the two-block separable convex optimization problem with linear equality constraints (\ref{problem1}), which derived from the inertial Douglas-Rachford splitting algorithm applied to the dual of (\ref{problem1}). The obtained algorithm generalized the inertial ADMM of \cite{Bot2016MTA}. Furthermore, we proved the convergence results of the proposed algorithm under mild conditions on the parameters. Numerical experiments for solving the RPCP (\ref{e4.2}) showed that the advantage of the proposed algorithm over existing iterative algorithms including the classical ADMM and the inertial ADMM introduced by Chen et al. \cite{Chen2015}. We also found the proposed algorithm  is  comparable with the GADMM (\ref{GADMM}).

\section*{Acknowledgement}
This work was funded by the National Natural Science Foundation of China (11661056, 11771198, 11771347, 91730306, 41390454, 11401293), the China Postdoctoral Science Foundation (2015M571989)
and the Jiangxi Province Postdoctoral Science Foundation (2015KY51).


\end{document}